\algnewcommand\algorithmicinput{\textbf{Input:}}
\algnewcommand\Input{\item[\algorithmicinput]}
\algnewcommand\algorithmicoutput{\textbf{Output:}}
\algnewcommand\Output{\item[\algorithmicoutput]}
\newlength{\ldprobleft} \setlength{\ldprobleft}{0.045\textwidth} 
\newlength{\ldprobmid}  \setlength{\ldprobmid}{0.12\textwidth} 
\newlength{\ldprobright}\setlength{\ldprobright}{0.78\textwidth} 
\newcommand{\dproblem}[3]{
\begin{equation*}
\parbox{\textwidth}{
\begin{tabular}{ @{} p{\ldprobleft} p{\ldprobmid} p{\ldprobright} @{} }
& \multicolumn{2}{l}{\textbf{\uppercase{#1}}} \\
& {\begin{minipage}[t]{\ldprobmid}Input:\vspace{1.5pt}\end{minipage}} & {\begin{minipage}[t]{\ldprobright}#2\vspace{1.5pt}\end{minipage}} \\ 
& {\begin{minipage}[t]{\ldprobmid}Problem:\end{minipage}} & {\begin{minipage}[t]{\ldprobright}#3\end{minipage}} \\
\end{tabular}}
\end{equation*}
}
\newcommand{\lref}[1]{(\ref{#1})}
\newcommand{\N}{\mathbb{N}}
\newcommand{\alg}[1]{#1}
\newcommand{\oneto}[1]{[#1]}
\newcommand{\subuni}[1]{ {\langle #1 \rangle} }
\newcommand{\polyt}{\textnormal{P}}
\newcommand{\np}{\textnormal{NP}}
\newcommand{\pspace}{\textnormal{PSPACE}}
\newcommand{\smp}{\textnormal{SMP}}
\DeclareMathOperator{\id}{id}
\newtheorem{thm}{Theorem}[section]
\newtheorem{lma}[thm]{Lemma}
\newtheorem{clr}[thm]{Corollary}
\theoremstyle{definition}
\newtheorem{prm}[thm]{Problem}
\newtheorem{dfn}[thm]{Definition}
\newtheorem{exm}[thm]{Example}
\theoremstyle{remark}
\DeclareMathOperator{\idX}{\textbf{id}}
\newcommand{\Mid}{\idX}
\newcommand{\zero}{\bm 0}
\newcommand{\one}{\bm 1}
\newcommand{\flip}{\textbf{not}}
\newcommand{\xtoy}[2]{\bm{#1\mapsto#2}}
\newcommand{\ztoz}{\xtoy{0}{0}}
\newcommand{\ztoo}{\xtoy{0}{1}}
\newcommand{\otoz}{\xtoy{1}{0}}
\newcommand{\Cp}{\textbf{cp}\xspace}
\newcommand{\nCp}{\textbf{ncp}\xspace}
\title{The subpower membership problem for semigroups}
\author{Andrei Bulatov}
\address[Andrei Bulatov]{School of Computing Science, Simon Fraser University, Burnaby BC, Canada}
\email{andrei.bulatov@gmail.com}
\author{Marcin Kozik}
\address[Marcin Kozik]{Theoretical Computer Science, Faculty of Mathematics and Computer Science, Jagiellonian University, Poland}
\email{marcin.kozik@uj.edu.pl}
\author{Peter Mayr}
\address[Peter Mayr]{Institute for Algebra, Johannes Kepler University Linz, Austria \& Department of Mathematics, CU Boulder, USA}
\email{peter.mayr@colorado.edu}
\author{Markus Steindl}
\address[Markus Steindl]{Institute for Algebra, Johannes Kepler University Linz, Austria \& Department of Mathematics, CU Boulder, USA}
\email{markus.steindl@colorado.edu}
\date{\today}
\thanks{This material is based upon work supported by the Austrian Science Fund (FWF): P24285,
 the National Science Centre Poland: UMO-2014/13/B/ST6/01812,
 the National Science Foundation under Grant No. DMS 1500254, and
 an NSERC Discovery grant.}
\keywords{semigroup, direct power, membership problem}
\subjclass[2000]{Primary: 20M99; Secondary: 68Q25}
\begin{document}

\begin{abstract}
Fix a finite semigroup $S$ and let $a_1,\ldots,a_k, b$ be tuples in a direct power $S^n$.
The subpower membership problem (\smp{}) asks whether $b$ can be generated by $a_1,\ldots,a_k$.
If $S$ is a finite group, then there is a folklore algorithm that decides this problem in time polynomial in $n k$.
 For semigroups this problem always lies in \pspace. We show that the $\smp{}$ for a 
 full transformation semigroup on $3$ or more letters is actually \pspace-complete,
 while on $2$ letters it is in \polyt.
 For commutative semigroups, we provide a dichotomy result:
 if a commutative semigroup $S$ embeds into a direct product of a Clifford semigroup and a nilpotent
 semigroup, then $\smp(S)$ is in P;
 otherwise it is \np-complete.
\end{abstract}

\maketitle

\section{Introduction}

Deciding membership is a basic problem in computer algebra.
For permutation groups given by generators, it can be solved in polynomial time using Sims' stabilizer
chains~\cite{Furst1980}.
For transformation semigroups, membership is $\pspace$-complete by a result of Kozen~\cite{Ko:LBNP}.

In this paper we study a particular variation of the membership problem that was proposed by
Willard in connection with the study of constraint satisfaction problems (CSP)~\cite{IM:TLAA,Willard2007}.
 Fix a finite algebraic structure $\alg S$ with
finitely many basic operations. Then the \emph{subpower membership problem} (\smp{}) for $\alg S$
is the following decision problem:
\dproblem{SMP(\textit{S})}{
$\{a_1,\ldots,a_k\} \subseteq S^n,b \in S^n$}{
Is $b$ in the subalgebra $\subuni{a_1,\ldots,a_k}$ of $S^n$ generated by\\ $\{a_1,\ldots,a_k\}$?}
For example, for a one-dimensional vector space $S$ over a field $F$, $\smp(S)$ asks whether a 
vector $b \in F^n$ is spanned by vectors $a_1,\ldots,a_k \in F^n$.

Note that  $\smp(S)$ has a positive answer iff there exists a $k$-ary 
term function $t$ on $S$ 
such that $t( a_1,\ldots,a_k ) = b$, that is
\begin{equation} \label{eq:tabi}  
t( a_{1i},\ldots,a_{ki} ) = b_i \quad\text{for all} \quad i\in\{1,\dots,n\}.
\end{equation}
 Hence $\smp(S)$ is equivalent to the following problem: Is the partial operation $t$ that is
 defined on an $n$ element subset of $S^k$ by~\eqref{eq:tabi} the restriction of a term function
 on $\alg S$? 

Note that the input size of $\smp(S)$ is essentially $n(k+1)$.
Since the size of $\subuni{ a_1,\ldots,a_k }$ is limited by $|S|^n$, 
one can enumerate all elements in time exponential in $n$ using a straightforward closure
 algorithm.
This means that $\smp(S)$ is in EXPTIME for each algebra $S$.
Kozik constructed a class of algebras which actually have EXPTIME-complete subpower membership
problems \cite{Kozik2008}.

Still for certain structures the \smp{} might be considerably easier. 
For $S$ a vector space, the \smp{} can be solved by Gaussian elimination in polynomial time. 
For groups the \smp{} is in \polyt{} as well by an adaptation of permutation group algorithms 
\cite{Furst1980,Zweckinger2013}.
Even for certain generalizations of groups and quasigroups the \smp{} can be shown to be in
\polyt{} \cite{Mayr2012}.

In the current paper we start the investigation of algorithms for the \smp{} of finite semigroups and its complexity. 
We will show that the \smp{} for arbitrary semigroups is in $\pspace$ in Theorem~\ref{thm:pspace}
 For the full transformation semigroups $T_n$ on $n$ letters we will prove the following
 in Section~\ref{se:sg}.
\begin{thm}  \label{thm:tn}
 $\smp(T_n)$ is \pspace-complete for all $n\geq 3$, while $\smp(T_2)$ is in~\polyt.
\end{thm}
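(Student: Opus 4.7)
Since Theorem~\ref{thm:pspace} places $\smp(S)$ in $\pspace$ for every finite semigroup $S$, what remains is to prove $\pspace$-hardness for $n\ge3$ and to exhibit a polynomial-time algorithm for $n=2$.

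The plan for the hardness half is to reduce from the bounded-state intersection non-emptiness problem for DFAs: given DFAs $D_1,\dots,D_k$ over a common alphabet $\Sigma$ with each $D_i$ having at most $n$ states, decide whether $\bigcap_i L(D_i)\neq\emptyset$. I would first argue that this problem is $\pspace$-complete whenever $n\ge3$; the natural way is to simulate a polynomial-space Turing machine with a family of $n$-state DFAs, each verifying a single local consistency condition of a purported accepting computation history. Given such an instance, normalized so that each $D_i$ has a unique start state $s_i$, a unique accepting state $t_i$, and no letter of $\Sigma$ acts as a constant on any $D_i$, I would build an $\smp(T_n)$ instance in $T_n^k$ as follows: for each $\sigma\in\Sigma$ let $a_\sigma:=(\delta_i(\cdot,\sigma))_{i=1}^k$, adjoin the constant tuple $c_s:=(c_{s_i})_{i=1}^k$ as an extra generator, and take the target to be $c_t:=(c_{t_i})_{i=1}^k$. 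A coordinate-wise analysis of products in $T_n^k$, tracking how the leftmost constant at each coordinate determines the final value, then gives that $c_t\in\langle\{a_\sigma:\sigma\in\Sigma\}\cup\{c_s\}\rangle$ iff some word over $\Sigma$ is simultaneously accepted by every $D_i$. The main obstacle here is the $\pspace$-hardness of bounded-state DFA intersection itself, which does not follow directly from Kozen's original unbounded-state result.

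For the polynomial-time algorithm on $T_2$, the first step is to exploit the small size of $T_2=\{\id,\flip,c_0,c_1\}$ to give a transparent formula for coordinate-wise products: at each coordinate $i$, a product $a_{j_1}\cdots a_{j_l}$ equals $\id$ or $\flip$ (depending on the parity of $\flip$s) when no $(a_{j_t})_i$ is constant, and otherwise equals a constant whose value is determined by the \emph{leftmost} $t$ with $(a_{j_t})_i$ constant together with the parity of $\flip$s at positions $<t$. I would partition the target's coordinates into $I_g=\{i:b_i\in\{\id,\flip\}\}$ and $I_c=\{i:b_i\in\{c_0,c_1\}\}$, discard every generator that has a constant entry at some $i\in I_g$ (using it would irreversibly turn $b_i$ constant there), and then handle the remaining two conditions: the parity conditions at $I_g$ form a linear system over $\mathbb F_2$ in the multiplicities of the generators, solvable by Gaussian elimination, while the conditions at $I_c$ reduce to a scheduling problem of choosing, for each $i\in I_c$, a triggering generator that is to be the leftmost one with a constant at $i$, and arranging the word so the prefix $\flip$-parity at $i$ matches the required value. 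The main difficulty is keeping these scheduling constraints consistent across all coordinates simultaneously; the key leverage is the freedom to repeat generators, which shifts realizable parity profiles by any element in the $\mathbb F_2$-span of the flip profiles of the surviving generators and, together with a short case analysis, reduces the whole problem to polynomially many linear-feasibility tests over $\mathbb F_2$.
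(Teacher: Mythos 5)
Your reduction for the hardness half rests on the claim that intersection non-emptiness for DFAs with at most $3$ states is $\pspace$-hard, and this is exactly where the proposal breaks down. In the paper that statement is Corollary~\ref{co:automata}, and it is \emph{deduced from} the $\pspace$-hardness of $\smp(T_3)$, not the other way around; so your route is circular relative to the only proof on record. Your suggested direct argument --- simulate a polynomial-space machine by a family of $3$-state DFAs, ``each verifying a single local consistency condition of a purported accepting computation history'' --- does not work as stated: to check that cell $j$ of one configuration follows from cells $j-1,j,j+1$ of the previous one, an automaton must locate position $j$ inside each configuration block of the input word, which forces it to count up to the (polynomially long) configuration length and hence to have polynomially many states; with only $3$ states per machine there is no obvious way to distribute this counting. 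The actual content of the paper's proof of Theorem~\ref{thm:t3} is a reduction from Kozen's \emph{composition} problem for transformations of $\oneto{n}$ that circumvents precisely this obstacle: a function $g\colon\oneto n\to\oneto n$ is spread over $n^2+mn$ coordinates as a $\{\zero,\one\}$-valued mapping tuple, and auxiliary choice and application tuples built from $\Mid,\ztoz,\ztoo,\otoz$ force any product yielding $m_f$ to follow the pattern~\eqref{eq:mgfi}, with the absorbing state $\infty$ acting as a trap that kills every deviating product. Without a gadget of this kind, or an independent proof of the bounded-state intersection result, the hardness half is not established.

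The $T_2$ half is closer to a correct argument but is also incomplete at its crux. Discarding generators that are constant where $b$ is not, and solving the coordinates in $I_g$ by linear algebra over $\mathbb{F}_2$, both match the paper. But the remaining ``scheduling problem'' --- choosing, for each constant coordinate of $b$, which used generator is the extremal constant-bearing factor there, consistently with a single linear word and with the coupled prefix-parity constraints --- is exactly where the algorithm lives, and ``a short case analysis'' does not discharge it: a priori there are $k^{|I_c|}$ trigger assignments, and any admissible assignment must in addition be order-consistent (if $a_j$ is used at all and is constant at $i$, the trigger of $i$ cannot occur on the wrong side of $a_j$), so the feasible set is not visibly a union of polynomially many affine subspaces. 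The paper resolves this with a recursion on coordinates (Algorithm~\ref{alg:t2}): guess the last constant-bearing factor $a_i$ of the word, solve an $\smp(\mathbb{Z}_2)$ instance on the constant part of $a_i$, correct $b$ by the suffix found, and recurse on the non-constant part of $a_i$; correctness hinges on the identity~\eqref{eq:bai}, which uses that $a_ia_i$ and the squared suffix act as identities off the constant parts. You would need to supply an argument of comparable substance before the claim that everything reduces to polynomially many linear-feasibility tests can be believed.
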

 This is the first example of a finite algebra with \pspace-complete \smp{}.
 As a consequence we can improve a result of Kozen from~\cite{Ko:LBNP} on the intersection of regular
 languages in Corollary~\ref{co:automata}.

 Moreover
 the following is the smallest semigroup and the first example of an algebra with $\np$-complete
 \smp{}.
\begin{exm}
Let $Z_2^1 := \{ 0,a,1 \}$ denote the 2-element null semigroup adjoined with a $1$,
i.e., $Z_2^1$ has the following multiplication table:
\[ \begin{array}{r|lll}
Z_2^1 & 0 & a & 1 \\
\hline
0 &  0 & 0 & 0 \\
a &  0 & 0 & a \\
1 &  0 & a & 1 \\
\end{array} \]
Then $\smp( Z_2^1 )$ is \np-complete.
$\np$-hardness follows from Lemma \ref{lma:not_clifford_nilpotent_sg_0} by encoding the
exact cover problem. That the problem is in $\np$ for commutative semigroups is proved in
 Lemma \ref{lma:smp_commutative_semigroups}.
\end{exm}

Generalizing from this example we obtain the
 the following dichotomy for commutative semigroups.

\newcommand{\ccsg}{Clifford semigroup}
\begin{thm}
\label{thm:tfae_clifford_nilpotent_commut}
Let $S$ be a finite commutative semigroup. 
Then $\smp(\alg S)$ is in \polyt{} if one of the following equivalent conditions holds:
\begin{enumerate}
\item 
\label{lma:tfae_clifford_nilpotent_list_2}
$S$ is an ideal extension of a \ccsg{} by a nilpotent semigroup;
\item
\label{lma:tfae_clifford_nilpotent_list_3}
the ideal generated by the idempotents of $\alg S$ is a \ccsg{};
\item
\label{lma:tfae_clifford_nilpotent_list_3b}
for every idempotent $e \in S$ and every $a \in S$ where $ea = a$ the element $a$ generates a group;
\item 
\label{lma:tfae_clifford_nilpotent_list_1b}
$S$ embeds into the direct product of a Clifford semigroup and a nilpotent semigroup.
\end{enumerate}
Otherwise $\smp(\alg S)$ is \np-complete.
\end{thm}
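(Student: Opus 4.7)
The plan is to establish the four structural equivalences first, then assemble the complexity dichotomy around them by invoking the two lemmas already cited in the Example.

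For the equivalences, working inside a finite commutative $S$, the quickest bridge is (2) $\Leftrightarrow$ (3). An arbitrary element of the ideal $I$ generated by the idempotents has the form $ea$ with $e \in E(S)$, and since $e(ea) = ea$, condition (3) forces $ea$ into a subgroup, so $I$ is a union of subgroups and hence Clifford. Conversely, if $I$ is Clifford and $ea = a$, then $a = ea \in I$ lies in a subgroup and so generates a group. For (1) $\Leftrightarrow$ (2) I would use that in a finite commutative semigroup the quotient by an ideal containing all idempotents has only zero as an idempotent and is therefore nilpotent; conversely, in any ideal extension of a Clifford semigroup $C$ by a nilpotent one, every idempotent of $S$ must already lie in $C$, so $C$ coincides with $I$. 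Finally (1) $\Leftrightarrow$ (4) follows by constructing the retract $\pi\colon S \to I$, $a \mapsto a^{N+1}$, with $N$ large enough that $a^N$ is the identity of the maximal subgroup containing $a^\omega$. Commutativity makes $\pi$ a homomorphism, and it restricts to the identity on $I$, so $(\pi, \mathrm{quot})\colon S \hookrightarrow I \times (S/I)$ gives the embedding; the reverse implication is immediate after locating idempotents inside $C \times N$.

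For the polynomial-time upper bound I would invoke characterisation (4) and embed $S \hookrightarrow C \times N$. The projection onto $C$ reduces $\smp(S)$ to group membership across the maximal subgroups of the semilattice-of-groups decomposition, solved by adapting the folklore group algorithm; the projection onto $N$ exploits the fact that a nilpotent semigroup of class $d$ annihilates every product of length $\geq d$, so only polynomially many candidate products remain to examine. The delicate point is synchronising the two coordinates, but commutativity lets us work with a single exponent vector and combine the pieces into one polynomial algorithm.

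For the lower bound, NP membership for any finite commutative $S$ comes from representing each element of $\subuni{a_1,\dots,a_k}$ as $a_1^{e_1}\cdots a_k^{e_k}$ with $e_i \leq |S|$, giving a certificate of $O(k \log|S|)$ bits; this is exactly Lemma~\ref{lma:smp_commutative_semigroups}. When the equivalent conditions fail, negation of (3) supplies an idempotent $e$ and an element $a$ with $ea = a$ whose cyclic subsemigroup is not a group, i.e., $a$ has index $\geq 2$ and its powers therefore possess a proper non-group tail before entering the periodic part. Lemma~\ref{lma:not_clifford_nilpotent_sg_0} exploits this gap to encode \textsc{exact cover}: each variable selects a set, each coordinate of the target $b$ records the number of chosen sets covering one element, and the distinction between the tail and the cycle in the powers of $a$ forces that count to be exactly one. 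I expect the main obstacle to be extracting a \emph{uniform} gadget out of just the failure of (3): the index and period of the offending $a$ vary across semigroups, so the coordinates and target used in the reduction must be chosen so that the encoding works regardless of these parameters.
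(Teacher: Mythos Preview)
Your plan is essentially the paper's: the equivalences are argued with the same retraction $a\mapsto a^{N+1}$ onto the idempotent-generated ideal, and the \np-complete half is exactly the combination of Lemma~\ref{lma:smp_commutative_semigroups} and Lemma~\ref{lma:not_clifford_nilpotent_sg_0} that you name.

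The one place you diverge is the polynomial upper bound. You propose to use characterisation~(4), embed $S\hookrightarrow C\times N$, and then solve the two projected instances while synchronising a common exponent vector. The paper instead uses characterisation~(1) directly: if $b$ lies outside the Clifford ideal, then any product equal to $b$ has fewer than $d$ factors (the Rees quotient being $d$-nilpotent), so one enumerates those; if $b$ lies in the ideal, one replaces the generator set $A$ by $B=\{a_1\cdots a_k\in S^n : k<2d,\ a_i\in A\}$, which already sits inside the Clifford ideal, and hands the instance $(B,b)$ to the Clifford $\smp$ algorithm. This sidesteps entirely the ``delicate synchronisation'' you anticipate, and it works for noncommutative ideal extensions as well. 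Your route can be completed, but you would still need the observation that any witness in the Clifford coordinate can be padded (by idempotent powers) to length $\ge d$ so that the nilpotent coordinate is forced to $0$; the paper's reduction makes that padding automatic.

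Your final worry about extracting a ``uniform gadget'' from the failure of~(3) is not an obstacle. In $\smp(S)$ the semigroup $S$ is fixed, so the witnessing idempotent $e$ and element $a$ with $ea=a$ and $\langle a\rangle$ not a group are constants of the reduction; all that is used is $a\notin\{a^2,a^3,\dots\}$, and the \textsc{exact cover} encoding in Lemma~\ref{lma:not_clifford_nilpotent_sg_0} needs no adjustment for the particular index or period of $a$.
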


Theorem \ref{thm:tfae_clifford_nilpotent_commut} is proved in Section \ref{sec:SMP_commut}.
Our way towards this result starts with describing a polynomial time algorithm for the \smp{} for
Clifford semigroups in Section \ref{sec:clifford_nilpotent}.
In fact in Corollary~\ref{thm:clifford_nilpotent} we will show that $\smp(\alg S)$ is in \polyt{} 
for every (not necessarily commutative) ideal extension of a \ccsg{} by a nilpotent semigroup.

Throughout the rest of the paper, we write $\oneto{n}:=\{1,\dots,n\}$ for $n\in\N$.
Also a tuple $a \in S^n$ is considered as a function 
$a\colon [n] \rightarrow S$. So the $i$-th coordinate of this tuple is denoted by $a(i)$ 
rather than $a_i$.

\section{Full transformation semigroups}  \label{se:sg}

First we give an upper bound on the complexity of the subpower membership problem for arbitrary
 finite semigroups.

\begin{thm}  \label{thm:pspace}
 The \smp{} for any finite semigroup is in \pspace.
\end{thm}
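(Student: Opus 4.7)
The plan is to place $\smp(S)$ in \npspace{} and then invoke Savitch's theorem to conclude it is in \pspace.

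The key preliminary fact is a polynomial-space certificate for membership: if $b\in\subuni{a_1,\ldots,a_k}$, then $b$ can be written as a product $b=a_{i_1}\cdots a_{i_m}$ (componentwise in $S^n$) with $m\le |S|^n$. To see this, I would take a shortest such expression for $b$ and look at the prefix products $p_j:=a_{i_1}\cdots a_{i_j}$ for $j=1,\ldots,m$. If $p_j=p_{j'}$ for some $j<j'$, then
\[
b \;=\; p_j\cdot a_{i_{j'+1}}\cdots a_{i_m}
\]
would be a strictly shorter expression, contradicting minimality. Hence all $p_j\in S^n$ are distinct, so $m\le |S|^n$.

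Given this bound, the \npspace{} algorithm is straightforward: nondeterministically guess an index $i_1\in[k]$ and store $p:=a_{i_1}$; then for up to $|S|^n$ rounds, nondeterministically guess $i\in[k]$, update $p:=p\cdot a_i$ componentwise, and accept as soon as $p=b$. The running product $p\in S^n$ uses $n\lceil\log_2|S|\rceil$ bits, and a counter that counts up to $|S|^n$ uses $\lceil n\log_2|S|\rceil$ bits, both polynomial in the input size $n(k+1)\lceil\log_2|S|\rceil$. Soundness is immediate and completeness follows from the length bound above. Thus $\smp(S)\in\npspace$, and Savitch's theorem yields $\smp(S)\in\pspace$.

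There is no real obstacle here; the only point requiring a moment's thought is the length bound, which is the semigroup analogue of the familiar fact that in a group the subgroup generated by $k$ elements has diameter at most $|G|$. The argument above replaces cancellation (unavailable in semigroups) by the simple observation that a repeated \emph{prefix} product allows one to delete the intervening factors.
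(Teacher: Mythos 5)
Your proposal is correct and follows essentially the same route as the paper: nondeterministically guess the factors one at a time, keep only the running prefix product in memory, and apply Savitch's theorem. The explicit length bound $m\le |S|^n$ and the step counter are a careful refinement that the paper omits (it simply notes the nondeterministic machine runs in linear space), but they do not change the argument.
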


\begin{proof}
Let $S$ be a finite semigroup. We show that 
\begin{equation} \label{eq:NPS}
\smp(S) \text{ is in nondeterministic linear space.}
\end{equation}
To this end, let $A\subseteq S^n,\,b\in S^n$ be an instance of $\smp(S)$.
If $b\in\subuni{A}$, then there exist $a_1,\dots,a_m\in A$ such that $b = a_1\cdots a_m$.

Now we pick the first generator $a_1 \in A$ nondeterministically and start with $c := a_1$.
Pick the next generator $a \in A$ nondeterministically, compute $c := c\cdot a$, and repeat until 
we obtain $c = b$. Clearly all computations can be done in space linear in $n\cdot |A|$.
 This proves~\eqref{eq:NPS}.
By a result of Savitch~\cite{Savitch1970} this implies that $\smp(S)$ is in deterministic quadratic 
space.
\end{proof}


 The first part of Theorem~\ref{thm:tn} follows from the next result since $T_3$ embeds into $T_n$ for all
 $n\geq 3$.

\begin{thm}\label{thm:t3}
  $\smp(T_3)$ is \pspace-complete.
\end{thm}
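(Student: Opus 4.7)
Containment in \pspace{} is immediate from Theorem \ref{thm:pspace} applied with $S=T_3$.

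For \pspace-hardness, I would reduce from the non-emptiness of intersection problem for deterministic finite automata (DFAs), which is \pspace-complete by Kozen \cite{Ko:LBNP}. A refinement of his argument, via an encoding of a polynomial-space Turing machine, shows that the problem remains \pspace-hard even when each DFA has state set $\{1,2,3\}$, start state $1$, accepting set $\{2\}$, and state $3$ acting as an absorbing trap (i.e., every letter fixes $3$). Given such DFAs $D_1,\ldots,D_n$ over alphabet $\Sigma$, I would construct an instance of $\smp(T_3)$ on $n+1$ coordinates. For each $\sigma\in\Sigma$, define $a_\sigma\in T_3^{n+1}$ with $a_\sigma(i)$ equal to the transition of $\sigma$ in $D_i$ for $i\le n$, and $a_\sigma(n+1)=\id$. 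Introduce two auxiliary generators: a reset $a_r$ that is the constant map to $1$ in every coordinate, and a marker $a_\#$ whose coordinates $i\le n$ send $1,3\mapsto 3$ and $2\mapsto 1$, and whose $(n+1)$-th coordinate is the constant map to $2$. Set the target $b$ to be constant-to-$1$ in coordinates $1,\ldots,n$ and constant-to-$2$ in coordinate $n+1$.

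For the forward direction, if $w\in\bigcap_{i=1}^{n} L(D_i)$, then the product $a_r\cdot\bigl(\prod_{\sigma\in w} a_\sigma\bigr)\cdot a_\#$ equals $b$: $a_r$ collapses all states to $1$ in each $D_i$, reading $w$ drives each $D_i$ from $1$ to $2$, and $a_\#$ then maps $2$ back to $1$, while the $(n+1)$-th coordinate resolves to the constant map to $2$. For the converse, any word of generators producing $b$ must, by inspecting coordinate $n+1$ (where the product is a constant map determined by the rightmost occurrence of $a_r$ or $a_\#$), contain at least one $a_\#$ after its last $a_r$. Analysis of coordinates $i\le n$, exploiting that state $3$ is a trap escapable only by $a_r$ and that $a_\#$ sends any non-accepting state to this trap, then shows that between the last $a_r$ and the first subsequent $a_\#$ there is a $\Sigma$-word $v$ accepted by every $D_i$, whence $\bigcap_i L(D_i)\ne\emptyset$.

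The main technical obstacle is the preliminary restriction step: proving that DFA intersection remains \pspace-hard for the specialized DFAs described. This requires adapting Kozen's Turing-machine simulation so that the local consistency checks of the computation tableau are expressible by DFAs with only three states, a single accepting state, and a fixed trap state; one has to be especially careful that introducing the trap does not destroy \pspace-hardness, typically by using a separate ``witness letter'' to enter the trap only on detected inconsistencies. A secondary care is needed in the soundness analysis: a generating product may involve many interleaved copies of $a_r$ and $a_\#$, so one must argue that the crucial common-accepting $\Sigma$-factor is always present just after the very last $a_r$; this follows from the fact that $a_r$ wipes out all prior information in coordinates $1,\ldots,n$ while forcing a constant outcome in coordinate $n+1$.
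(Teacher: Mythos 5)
Your reduction in the second half (from 3-state DFA intersection to $\smp(T_3)$ on $n+1$ coordinates, using a reset generator, a marker generator, and the trap state) is plausible in outline, but the proposal has a genuine gap at exactly the point you yourself flag as ``the main technical obstacle'': the claim that DFA intersection non-emptiness remains \pspace-hard for automata with state set $\{1,2,3\}$, a single accepting state, and an absorbing trap. This is not a routine refinement of Kozen's argument. Kozen's automata check local consistency of a Turing-machine tableau and must remember a window of tape contents across a configuration, so they have a number of states polynomial in the space bound; collapsing them to three states requires distributing that memory across many automata and then enforcing synchronization among them, and that synchronization mechanism is precisely the content of Theorem~\ref{thm:t3}. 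Indeed, the paper establishes the 3-state restriction of the intersection problem as \pspace-hard only in Corollary~\ref{co:automata}, \emph{by reducing $\smp(T_3)$ to it} --- so taking that restricted hardness as your starting point makes the argument circular unless you supply an independent proof, and none is sketched beyond the assertion that one must ``be especially careful.''

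The paper sidesteps this by reducing instead from Kozen's composition problem for transformations of $\oneto{n}$, where $n$ is part of the input, so there is no bounded-state obstacle to overcome at the source of the reduction. All of the work then goes into the gadget construction: a function $g\colon\oneto{n}\to\oneto{n}$ is encoded as a tuple in $T_3^{n^2+mn}$ with entries $\zero$ and $\one$, and ``choice'' and ``application'' tuples, together with elements that send $0$ or $1$ to the absorbing point $\infty$ (producing ``bad'' tuples that can never be repaired), force any product equal to the target to simulate composition step by step. If you want to salvage your route, you would need to carry out essentially this same distributed-memory-plus-trap construction inside the DFA setting first, at which point you would have reproved the theorem in a different guise; as written, the proposal defers the entire difficulty to an unproven premise.
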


\begin{proof}
  Kozen~\cite{Ko:LBNP} showed that the following decision problem is \pspace-complete:
  input $n$ and functions $f,f_1,\dotsc,f_m:\oneto{n}\rightarrow\oneto{n}$ and decide whether 
  $f$ can be obtained as a composition%
  \footnote{We will assume that the identity function can be obtained even from an empty set of functions.
  This little twist does not change the complexity of the problem.}
  of $f_i$'s.
  The size of the input for this problem is $(m+1)n\log n$.

  To encode this problem into $\smp(T_3)$ let $T_3$ be the full transformation semigroup of $0,1$, and $\infty$.
  Transformations act on their arguments from the right.
  We identify $g$, an element of $T_3$, with the triple $(0^g,1^g,\infty^g)$
  and name a number of elements of $T_3$: 
  \begin{itemize}
    \item $\zero = (0,0,\infty)$ and $\one = (1,1,\infty)$ are used to encode the functions $\oneto{n}\rightarrow\oneto{n}$;
    \item $\Mid = (0,1,\infty),\,\ztoz = (0,\infty,\infty),\,\ztoo = (1,\infty,\infty)$, and $\otoz = (\infty,0,\infty)$
      are used to model the composition.
  \end{itemize}
  We call an element of $T_3$ {\em bad} if it sends $0$ or $1$ to $\infty$; 
  and we call a tuple of elements {\em bad} if it is bad on at least one position.
  Note that all the named elements
  send $\infty$ to $\infty$.
 So multiplying a bad element on the right by any of the named elements yields a bad element again.

  Let $n$ and $f,f_1,\dots, f_m$ be an input to Kozen's composition problem.
  We will encode it as \smp{} on $n^2+mn$ positions.
  We start with an auxiliary notation.
  Every function $g\colon\oneto{n}\rightarrow \oneto{n}$ can be encoded by a {\em mapping tuple}
 $m_g\in T_3^{n^2+mn}$ as follows:
\[ m_g(x) := \begin{cases} \one & \text{if } x\in\{ 1^g, n+2^g,\dots,(n-1)n+n^g \}, \\
                           \zero & \text{otherwise}. \end{cases} \] 
 Hence the first $n$ positions encode the image of $1$, the next $n$ positions the image of $2$, and so on.
 The final $mn$ positions are used to distinguish mapping tuples from other tuples that we will define shortly.
 Note that mapping tuples are never bad.
    
  We introduce the generators of the subalgebra of $T_3^{n^2+mn}$ gradually.
  The first generator is the mapping tuple $m_1$ for the identity on $\oneto n$.

  Next, for each $f_i$ we add the {\em choice tuple} $c_i$ defined as
\[ c_i(x) := \begin{cases} \Mid & \text{if } x\in\oneto{n^2}, \\
                           \ztoo & \text{if } x\in\{n^2+(i-1)n + 1,\dotsc,n^2+(i-1)n+n\}, \\
                           \ztoz & \text{otherwise}. \end{cases} \] 
  Multiplying the mapping tuple for $g$ on the right by the choice tuple for $f_i$
  corresponds to deciding that $g$ will be composed with $f_i$.

  Finally, for each $f_i$ and $j,k\in\oneto n$ we add the {\em application tuple} $a_{ijk}$ with the semantics
  \begin{equation*}
    \text{apply $f_i$ on coordinate $j$ to $k$.}
  \end{equation*}
 If $k\neq k^{f_i}$, then
\[ a_{ijk}(x) := \begin{cases} \otoz & \text{if } x\in\{(j-1)n+k,n^2+(i-1)n+j\}, \\
                           \ztoo & \text{if } x=(j-1)n+k^{f_i}, \\
                           \Mid & \text{otherwise}. \end{cases} \] 
 If $k = k^{f_i}$, then
\[ a_{ijk}(x) := \begin{cases} \otoz & \text{if } x=n^2+(i-1)n+j, \\
                           \Mid & \text{otherwise}. \end{cases} \] 
 Multiplication by the application tuples computes the composition decided by the choice tuples.
 More precisely, for $g\in T_n$ and $f_i$ we have
\begin{equation} \label{eq:mgfi}
 m_{gf_i} = m_g c_i a_{i11^g}\cdots a_{inn^g}. 
\end{equation}
 Here multiplying $m_g$ by $c_i$ turns the $i$-th block of $n$ positions among the last $nm$ positions 
 of $m_g$ to $\one$. The following multiplication with $a_{i11^g}\cdots a_{inn^g}$ resets these $n$ positions to
 $\zero$ again. At the same time, in the first $n$ positions of $m_gc_i$ the $\one$ gets moved from position $1^g$ to
 $(1^g)^{f_i}$, in the next $n$ positions the $\one$ gets moved from $n+2^g$ to $n+(2^g)^{f_i}$, and so on.
 Hence we obtain the mapping tuple of $gf_i$, and~\eqref{eq:mgfi} is proved.

  It remains to choose an element which will be generated by all these tuples
  iff $f$ is a composition of $f_i$'s. 
  This final element is the mapping tuple for $f$.
 We claim
\begin{equation} \label{eq:fiffmf}
 f\in\langle f_1,\dots, f_m\rangle \text{ iff } m_f\in\langle m_1, c_1,\dots,c_m, a_{111},\dots,a_{mnn}\rangle.
\end{equation}
 The implication from left to right is immediate from our observation~\eqref{eq:mgfi}.
 For the converse we analyze a minimal product of generator tuples which yields $m_f$ and show
 that it essentially follows the pattern from~\eqref{eq:mgfi}.
 Recall that no partial product starting in the leftmost element of the product can be bad.
 In particular the leftmost element itself needs to be $m_1$ -- the only generator which is not bad.
 If $m_1$ occurs anywhere else, then the product could be shortened as any tuple which is
 not bad multiplied by $m_1$ yields $m_1$ again. So we can disregard this case.

 The second element from the left cannot be an application tuple as the $\otoz$ on one of the last $mn$ positions
 would turn the result bad. Thus the only meaningful option is the choice tuple for some function $f_i$.
 Multiplying $m_1$ by $c_i$ turns $n$ positions~(among the last $mn$ positions) of $m_1$ to $\one$.
   
 The third element from the left cannot be a choice tuple:
 a multiplication by a choice tuple produces a bad result unless the last $mn$ positions 
 of the left tuple are all $\zero$. So before any more choice tuples occur in our product, 
 all $n$ $\one$'s in the last $mn$ positions have to be reset to $\zero$. This can only be achieved by 
 multiplying with $n$ application tuples of the form $a_{ijk_j}$ for $j\in\oneto{n}$.
 Focusing on the first $n^2$ positions of $m_1c_i$, we see that necessarily $k_j = j$ for all $j$.
 Hence the first $n+2$ factors of our product are
\[ m_1 c_i a_{i11}\cdots a_{inn} = m_{f_i}. \]  
 Note that the order of the application tuples do not matter.

 Continuing this reasoning with the mapping tuple for $f_i$~(instead of the identity),
 we see that the next $n+1$ factors of our product are some $c_j$ followed by $n$ application tuples
 $a_{j11^{f_i}},\dots,a_{jnn^{f_i}}$. Invoking~\eqref{eq:mgfi} we then get the mapping tuple for $f_i f_j$.
  In the end we get a mapping tuple for $f$ iff 
  $f$ can be obtained as a composition of the $f_i$'s and the identity. This proves~\eqref{eq:fiffmf}.

  The number of tuples we input into \smp{} is $mn^2+m + 2$,
  so the total size of the input is $\mathcal{O}((mn^2+m+2)(n^2+mn))$, that is, polynomial
  with respect to the size of the input of the original problem.
 Thus Kozen's composition problem has a polynomial time reduction to $\smp(T_3)$ and the latter is \pspace-hard
 as well. Together with Theorem~\ref{thm:pspace} this yields the result.
\end{proof}

 Next we show the second part of Theorem~\ref{thm:tn}.
      
\begin{thm}\label{thm:t2}
  $\smp(T_2)$ is in \polyt.
\end{thm}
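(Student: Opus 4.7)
The plan is to give an explicit polynomial-time algorithm for $\smp(T_2)$. First I would classify the coordinates of $b$ as $I_P := \{i : b(i) \in \{e,s\}\}$ and $I_C := \{i : b(i) \in \{c_0,c_1\}\}$, and restrict attention to $A' := \{a \in A : a(i) \in \{e,s\} \text{ for all } i \in I_P\}$: any generator outside $A'$ would, after one use, permanently force a coordinate of $I_P$ into $\{c_0,c_1\}$ (since $\{c_0,c_1\}$ is a right-absorbing two-sided ideal of $T_2$), so it could never contribute to a product giving $b$. In addition, each $i \in I_C$ must admit some $a \in A'$ with $a(i) \in \{c_0,c_1\}$, else $b \notin \langle A \rangle$.

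Next I would use the following explicit product formula. Define $\phi\colon T_2 \to \{0,1\}$ by $\phi(e) = \phi(c_0) := 0$ and $\phi(s) = \phi(c_1) := 1$. For a word $x_1 \cdots x_m$ in $T_2$: if no $x_\ell$ is a constant, the product is $s^{\bigoplus_\ell \phi(x_\ell)}$; otherwise, setting $\ell^\ast := \max\{\ell : x_\ell \in \{c_0,c_1\}\}$ and $\tau := \bigoplus_{\ell > \ell^\ast} \phi(x_\ell)$, the product is $c_{\phi(x_{\ell^\ast}) \oplus \tau}$. Applied coordinatewise, a word $w = a^{(1)} \cdots a^{(m)}$ over $A'$ evaluates to $b$ iff for every $i \in I_P$ we have $\bigoplus_\ell \phi(a^{(\ell)}(i)) = \phi(b(i))$, and for every $i \in I_C$ there is a largest index $\ell^\ast(i)$ with $a^{(\ell^\ast(i))}(i) \in \{c_0,c_1\}$ satisfying $\phi(a^{(\ell^\ast(i))}(i)) \oplus \bigoplus_{\ell > \ell^\ast(i)} \phi(a^{(\ell)}(i)) = \phi(b(i))$.

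Third I would decompose any candidate word as $w = u_0\, p_1\, u_1\, p_2 \cdots p_K\, u_K$, where $p_1, \ldots, p_K$ are the pivots (generators at positions $\ell^\ast(i)$ for some $i \in I_C$, in left-to-right order), $J_r := \{i \in I_C : \ell^\ast(i) = \mathrm{pos}(p_r)\}$ partitions $I_C$, and each $u_r$ is a word over $A_r := \{a \in A' : a(i) \in \{e,s\} \text{ for all } i \in I_P \cup J_1 \cup \cdots \cup J_r\}$. Because only the parities of generator multiplicities in each $u_r$ affect the formula above and the order within $u_r$ is immaterial, each $u_r$ can be encoded as a subset $S_r \subseteq A_r$. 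For a fixed ordered pivot sequence, the feasibility conditions then become a polynomial-size system of $\mathbb{Z}_2$-linear equations in indicator variables for the $S_r$'s, solvable by Gaussian elimination in polynomial time.

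The main obstacle is the search over ordered pivot sequences, of which there are naively up to $|A'|!$. To handle this I would first observe that WLOG the pivots are distinct (bounding $K \leq |A'|$) and each is useful, i.e.\ $J_r \neq \emptyset$. I then plan to establish an exchange lemma exploiting that $\{e,s\} \cong \mathbb{Z}_2$ is abelian and $\{c_0,c_1\}$ right-absorbs: transposing two adjacent pivots in a valid sequence yields another valid sequence (with appropriately modified $u_r$'s). This should let one restrict to a canonical ordering---for instance, any linear extension of the partial order defined by $p \preceq p' :\Longleftrightarrow C(p) \subsetneq C(p')$ with $C(p) := \{i \in I_C : p(i) \in \{c_0,c_1\}\}$---reducing the enumeration to polynomial time. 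Rigorously establishing this exchange lemma, in particular accounting for how pivot swaps interact with the suffix-parity targets at each $I_C$-coordinate, is the core step I expect to be the main difficulty.
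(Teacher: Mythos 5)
Your setup is sound as far as it goes: the identification of $\{c_0,c_1\}$ as a right-absorbing ideal, the product formula via the last constant factor plus the suffix parity, the observation that only the last occurrence of a generator can serve as a pivot, and the reduction of each intermediate segment $u_r$ to a parity vector over $\mathbb{Z}_2$. But the proof has a genuine gap exactly where you flag it: the search over ordered pivot sequences is never reduced to polynomial size, and the exchange lemma you propose for this purpose is doubtful as stated. If two pivots $p,p'$ have incomparable constant sets with $C(p)\cap C(p')\cap I_C\neq\emptyset$, transposing them reassigns every coordinate $i$ in that intersection to a different pivot: the constraint at $i$ changes from $\phi(p'(i))\oplus(\text{suffix parity at }i)=\phi(b(i))$ to $\phi(p(i))\oplus(\text{suffix parity at }i)=\phi(b(i))$, and since $\phi(p(i))$ and $\phi(p'(i))$ need not agree, and the compensating suffix parities can only be adjusted by inserting whole generator tuples (which simultaneously perturb $I_P$ and the other $J_s$), the two orders impose genuinely different and generally inequivalent linear systems. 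A concrete obstruction: with $C(p)\cap I_C=\{1,2\}$, $C(p')\cap I_C=\{2,3\}$ and suitable constant values, $p\,p'$ can equal $b$ while no word with $p'$ preceding $p$ does. A linear extension of the strict-inclusion order $C(p)\subsetneq C(p')$ says nothing about such incomparable overlapping pairs, so the factorial enumeration is not eliminated and no polynomial bound follows.

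The paper sidesteps the ordering problem entirely by peeling the word from the right: it guesses only the \emph{last} generator $a_i$ with non-empty constant part (at most $k$ candidates), notes that the suffix after it has empty constant part and therefore must solve an $\smp(\mathbb{Z}_2)$ instance on the constant part of $a_i$, and then recurses on the coordinates where $a_i$ is non-constant with a corrected target. Each recursive call strictly shrinks the constant part of the target, so the depth is at most $n$, and the relative order of the earlier pivots is never committed to --- it is absorbed into the recursion. If you want to rescue your approach, the natural fix is to turn it into the same right-to-left recursion: determine $p_K$ and $J_K=C(p_K)\cap I_C$ first, solve the resulting $\mathbb{Z}_2$ system on $J_K$, and recurse on $I_P\cup(I_C\setminus C(p_K))$; as written, however, the argument is incomplete.
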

\begin{proof}
  Let the underlying set of $T_2$ be $\{0,1\}$ and the constants of $T_2$ be denoted by
  $\zero$ and $\one$ and the non-constants by $\Mid$ and $\flip$.
  For a tuple $a\in T_2^n$ the {\em constant part}~(or \Cp) of $a$ is the set of indices $i\in\oneto{n}$ such that
 $a(i)\in T_2$ is a constant,  the {\em non-constant part}~(or \nCp) are the remaining $i$'s.

  Let $a_1,\dotsc,a_k,b\in T_2^n$ be an instance of $\smp(T_2)$.
  Before starting the algorithm we preprocess the input 
  by removing all the $a_i$'s with \Cp not included in \Cp of $b$.
  It is clear that the removed tuples cannot occur in a product that yields $b$.
  Next we call the function $\textbf{SMP}(a_1,\dotsc,a_k,b)$ from Algorithm~\ref{alg:t2}.
  
  \begin{algorithm}
    \caption{ 
    \newline{} Function $\textbf{SMP}(a_1,\dotsc,a_k,b)$ solving $\smp(T_2)$.}
    \label{alg:t2}
    \begin{algorithmic}[1]
      \Input{$a_1,\dotsc,a_k,b\in T_2^n$}
      \Output{Is $b \in \subuni{a_1,\dotsc,a_k}$?}
        
        \State let $a_1,\dotsc,a_\ell$ be the $a_i$'s with empty \Cp\\and $a_{\ell+1},\dotsc,a_k$ with non-empty \Cp
        \If{$b$ has empty \Cp}
        \label{t2:z2}
          \State\Return{$b\in\langle a_1,\dotsc,a_\ell\rangle$} \Comment{instance of $\smp(\mathbb{Z}_2)$}
            \label{t2:z2true}
        \EndIf \label{t2:z2fi}
        \For{$i=\ell+1\dotsc n$}
          \label{t2:mainloop}
          \State \Comment{checks if $a_i$ can be the last element of the product with non-empty \Cp}
          \State let $a'_1,\dotsc,a'_\ell$ be projections of $a_1,\dotsc,a_\ell$ to \Cp of $a_i$
          \State let $b'$~(defined on \Cp of $a_i$) be  $b'(j) = \Mid$ if $a_i(j)=b(j)$ and $b'(j)=\flip$ else
          \If{$b'\in\langle a'_1,\dotsc,a'_\ell\rangle$} \Comment{instance of $\smp(\mathbb{Z}_2)$}
            \label{t2:innerif}
            \State assume $b' = a'_{j_1}\dotsb a'_{j_m}$ for $j_1,\dots,j_m\in\oneto{\ell}$ \label{t2:innerif2}
            \State set $c:=ba_{j_1}\dotsb a_{j_m}$
            \State let $a''_1,\dotsc,a''_k,c''$ be projections of $a_1,\dotsc,a_k,c$ to \nCp of $a_i$
            \State \Return{$\textbf{SMP}(a''_1,\dotsc,a''_k,c'')$}
              \label{t2:recursion}
          \EndIf
        \EndFor
      \State \Return{\textbf{FALSE}}
        \label{t2:false}
    \end{algorithmic}
  \end{algorithm}
  

 We show the correctness of Algorithm~\ref{alg:t2} by induction on the size of $\Cp$ of $b$.
  Note that if $b$ has empty \Cp then, by the preprocessing, 
  each $a_i$ has empty \Cp as well and the problem reduces to SMP over ${\mathbb Z}_2$~%
  (which is solvable in polynomial time by Gaussian elimination).
  This is the essence behind lines~\ref{t2:z2}--\ref{t2:z2fi} of the algorithm. 

  If $b$ has non-empty \Cp, we 
  first assume that $b = a_{j_1}\dotsb a_{j_m}$,
  and let $a_{j_{p}}$ be the last element of the product with non-empty \Cp.
  The suffix $a_{j_{(p+1)}}\dotsb a_{j_m}$ consists of elements of empty \Cp
  which multiply $a_{j_{p}}$, on its \Cp, to $b$.
  This means that the condition on line~\ref{t2:innerif} will be satisfied 
  for some $i$~(maybe with $i=j_{p}$, but maybe with some other $i$). 
  Since $b$ is generated by $a_1,\dots,a_k$ by assumption, then so is $c = ba_{j_1}\dotsb a_{j_m}$~%
  (for any sequence computed in a successful test in line~\ref{t2:innerif}).
  Now $c''$ is just a projection of $c$, and the recursive call in line~\ref{t2:recursion} will
 return the correct answer~\textbf{TRUE} by the induction assumption.

 Next assume that $b$ is not generated by $a_1,\dots,a_k$. Seeking a contradiction we suppose
 that the algorithm returns~\textbf{TRUE}. That is, the recursive call in line~\ref{t2:recursion}, 
 in the loop iteration at some $i$, answers~\textbf{TRUE}. Consequently 
 $b' = a'_{j_1}\dotsb a'_{j_m}$ for some $j_1,\dots,j_m\in\oneto{\ell}$ by line~\ref{t2:innerif2} and
 $c''= a''_{i_1}\dots a''_{i_p}$ for some $i_1,\dots,i_p\in\oneto{k}$ by the induction assumption. We claim that 
 \begin{equation} \label{eq:bai}
    b = a_{i_1}\dotsb a_{i_{p}}a_ia_ia_{j_1}\dotsb a_{j_m}.
 \end{equation}
  Indeed on indices from the \Cp of $a_i$ only the last $m+1$ elements matter and they provide proper values
  by the choice of the sequence $j_1,\dotsc,j_m$ computed by the algorithm.
  For the \nCp of $a_i$ the recursive call  provides $c$. 
  Since $a_ia_i$ is $\Mid$ on \nCp of $a_i$ and $a_{j_1}\dotsb a_{j_m}a_{j_1}\dotsb a_{j_m}$ is a tuple of $\Mid$'s~%
  (since all the tuples in the product have empty \Cp's)
  we obtain $b$ on $\nCp$ of $a_i$ as well. This proves~\eqref{eq:bai} and contradicts our assumption
  that $b$ is not generated by $a_1,\dots,a_k$. Hence the algorithm returns~\textbf{FALSE} in this case. 
  
  The complexity of the algorithm is clearly polynomial: 
  The function $\textbf{SMP}$ works in polynomial time, 
  and the depth of recursion is bounded by $n$ as 
  during each recursive call we loose at least one coordinate.
\end{proof}

 For proving that membership for transformation semigroups is \pspace-complete,
 Kozen first showed that the following decision problem is \pspace-complete~\cite{Ko:LBNP}.
\dproblem{Automata Intersection Problem}{
 deterministic finite state automata $F_1,\dots,F_n$ with common \\ alphabet $\Sigma$
}{
 Is there a word in $\Sigma^*$ that is accepted by all of $F_1,\dots,F_n$?
}
 Using the wellknown connection between automata and transformation semigroups we obtain
 the following stronger version of Kozen's result. 

\begin{clr}  \label{co:automata}
 The Automata Intersection Problem restricted to automata with $3$ states is \pspace-complete.
\end{clr}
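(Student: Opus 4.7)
The plan is to prove hardness by reducing $\smp(T_3)$, which is PSPACE-complete by Theorem~\ref{thm:t3}, to the Automata Intersection Problem restricted to $3$-state automata. Membership in PSPACE is inherited from Kozen's general bound, so only hardness needs work.

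Given an instance $a_1,\dotsc,a_k,b\in T_3^n$ of $\smp(T_3)$, the reduction I propose is a coordinatewise version of Kozen's construction: for each $i\in\oneto{n}$ and each $s\in\{0,1,\infty\}$, build a deterministic automaton $F_{i,s}$ with state set $\{0,1,\infty\}$, alphabet $\Sigma:=\oneto{k}$, initial state $s$, unique accepting state $s^{b(i)}$, and transition function $\delta(t,j):=t^{a_j(i)}$, retaining the right-action convention of Section~\ref{se:sg}. A word $w=j_1\cdots j_m\in\Sigma^*$ then drives $F_{i,s}$ from $s$ to $s^{a_{j_1}(i)\cdots a_{j_m}(i)}$, so $w$ is accepted by all $3n$ automata simultaneously iff $a_{j_1}\cdots a_{j_m}$ agrees with $b$ on every coordinate, i.e.\ iff $w$ witnesses $b\in\subuni{a_1,\dotsc,a_k}$.

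The main obstacle, and the one step I would treat with care, is the empty word: $\varepsilon$ is accepted by every $F_{i,s}$ precisely when $b$ equals the identity tuple, whereas $\smp(T_3)$ allows only nonempty products. To rule out this false positive I would first pad the instance with one extra coordinate in which every $a_j$ and also $b$ equal the constant map $\zero=(0,0,\infty)$. Since $\zero$ is idempotent but is not the identity of $T_3$, padding preserves the $\smp$ answer yet guarantees that $b$ is no longer the identity tuple, so $\varepsilon$ fails at some $F_{n+1,s}$. The reduction then outputs $3(n+1)$ deterministic automata with three states over an alphabet of size $k$, which is polynomial in the input size; together with the PSPACE upper bound this gives Corollary~\ref{co:automata}.
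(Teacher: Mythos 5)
Your reduction is essentially identical to the paper's: the same $3n$ three-state automata, one for each coordinate $i\in\oneto{n}$ and each $s\in\{0,1,\infty\}$, with initial state $s$, accepting state $s^{b(i)}$, and letters acting by $t\mapsto t^{a_j(i)}$. The only difference is your explicit padding to exclude the empty word --- a legitimate subtlety, since $\subuni{a_1,\dots,a_k}$ contains only nonempty products while the intersection problem quantifies over all of $\Sigma^*$ --- which the paper's proof passes over in silence; your fix via an extra coordinate constantly equal to $\zero$ is correct.
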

 
\begin{proof}
 The Automata Intersection Problem is in $\pspace$ by~\cite{Ko:LBNP}. 
 For \pspace-hardness we reduce $\smp(T_3)$ to our problem.
 Let $T_3$ act on $\{0,1,\infty\}$, and let $a_1,\dots,a_k,b\in T_3^n$ be the input of $\smp(T_3)$.

 For each position $i\in [n]$ we introduce three automata
 $F_i^0,\,F_i^1$, and $F_i^{\infty}$ each with the set of states $\{0,1,\infty\}$.
 These automata are responsible for storing the image of $0,\,1$, and $\infty$, respectively, under the transformation on position $i$. 
 The initial state of $F_i^j$ is $j$, its accepting state $j^{b(i)}$.
 The alphabet of the automata is $\{a_1,\dots,a_k\}$.
 For the automaton $F_i^j$ the letter $a_\ell$ maps the state $x$ to $x^{a_\ell(i)}$.

 Now all the $3n$ automata accept a common word $a_{i_1}\dots a_{i_p}$ over $\{a_1,\dots,a_k\}$ iff
 $j^{a_{i_1}\dots a_{i_p}(i)} = j^{b(i)}$ for all $i\in [n], j\in \{0,1,\infty\}$. The latter is equivalent to 
 $b\in\langle a_1,\dots,a_k\rangle$. Thus $\smp(T_3)$ reduces to the Automata Intersection Problem
 for automata with $3$ states which is then \pspace-hard by Theorem~\ref{thm:t3}.
\end{proof}

\section{Nilpotent semigroups}

\begin{dfn}
A semigroup $\alg S$ is called \emph{$d$-nilpotent} for $d \in \mathbb{N}$ if 
\[ \forall x_1,\ldots,x_d, y_1,\ldots,y_d \in S \colon x_1 \dotsm x_d = y_1 \dotsm y_d. \]
It is called \emph{nilpotent} if it is $d$-nilpotent for some $d \in \mathbb{N}$. 
We let $0 := x_1 \dotsm x_d$ denote the zero element of a $d$-nilpotent semigroup $\alg S$.
\end{dfn}

\begin{dfn}
An \emph{ideal extension} of a semigroup $I$ by a semigroup $Q$ with zero is a semigroup
$S$ such that $I$ is an ideal of $S$ and the Rees quotient semigroup $S/I$ is isomorphic to $Q$.
\end{dfn}

\begin{algorithm}
\caption{ 
\newline{}Reduce $\smp(T)$ to $\smp(S)$ for an ideal extension $T$ of $S$ by $d$-nilpotent $N$.}
\label{alg:nilpotent}
\begin{algorithmic}[1]
\Input $A \subseteq T^n,\, b \in T^n$.
\Output{ Is $b \in \subuni{A}$? }
	\If { $b \not\in S^n$ }
	\label{alg:nilpotent_l01}
		\For { $\ell \in \oneto{d-1}$ }
		\label{alg:nilpotent_l02}
			\For { $a_1,\ldots,a_\ell \in A$ }
			\label{alg:nilpotent_l03}
				\If { $b = a_1 \cdots a_\ell$ }
				\label{alg:nilpotent_l04}
					\State \Return { true } 
					\label{alg:nilpotent_l05}
				\EndIf
			\EndFor
		\EndFor
		\label{alg:nilpotent_l08}
		\State \Return { false }
		\label{alg:nilpotent_l09}
	\Else
                \State $B := \{ a_1 \cdots a_k \in S^n \mid k < 2d, a_1, \dots, a_k\in A \}$
                \label{alg:nilpotent_l11}
		\State \Return{ $b\in \subuni{B}$}
		\label{alg:nilpotent_l12}
\Comment{instance of $\smp(S)$ }
	\EndIf
\end{algorithmic}
\end{algorithm}

\begin{thm} \label{thm:nilpextension}
Let $T$ be an ideal extension of a semigroup $S$ by a $d$-nilpotent semigroup $N$.
Then Algorithm \ref{alg:nilpotent} reduces $\smp(T)$ to $\smp(S)$ in polynomial time.
\end{thm}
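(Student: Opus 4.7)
The plan rests on one structural observation: since $N \cong T/S$ is $d$-nilpotent and $S$ is an ideal of $T$, any product of $d$ or more elements of $T$ already lies in $S$ (straightforward from the quotient map, the absorptive property of $0$ in $N$, and $S$ being an ideal). Applied coordinatewise in $T^n$, this means any product of at least $d$ tuples from $A$ lies in $S^n$. This single fact drives both branches of Algorithm \ref{alg:nilpotent}.

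For the branch $b \notin S^n$ (lines \ref{alg:nilpotent_l02}--\ref{alg:nilpotent_l09}), the observation forces any witness $b = a_{i_1} \cdots a_{i_m}$ to have length $m \leq d-1$. The algorithm exhaustively tries all such products; since $d$ is a constant depending only on $T$, this runs in polynomial time.

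For the branch $b \in S^n$, it suffices to prove $\subuni{A} \cap S^n = \subuni{B}$. The inclusion $\subuni{B} \subseteq \subuni{A}$ is immediate from $B \subseteq \subuni{A}$. For the reverse, given a witness $b = a_{i_1} \cdots a_{i_m} \in S^n$, I would split the product into consecutive blocks each of length in $[d, 2d-1]$: if $m < 2d$ the whole product already forms a single block, and otherwise I peel prefixes of length exactly $d$ off the left, stopping when the remaining suffix has length less than $2d$, at which point the suffix length sits in $[d, 2d-1]$. Every block then lies in $S^n$ by the key observation and is a product of fewer than $2d$ generators of $A$, hence belongs to $B$, so $b$ is a product of elements of $B$. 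Complexity-wise, the first branch performs $O(|A|^{d-1})$ product computations, while the second constructs the set $B$ of size $O(|A|^{2d-1})$ before a single call to $\smp(S)$, both polynomial once $d$ is treated as a constant depending only on $T$. The main point to verify carefully is the block decomposition: ensuring the peeling terminates with a final suffix in $[d, 2d-1]$ so that it too lies in $B$, and handling the boundary between $m < 2d$ and $m \geq 2d$.
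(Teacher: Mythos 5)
Your proposal is correct and follows essentially the same route as the paper's proof: the same key observation that any product of $d$ or more generators lies in $S^n$, the same exhaustive search for short witnesses when $b \notin S^n$, and the same block decomposition (blocks of length $d$ plus a final block of length in $[d, 2d-1]$, which is exactly the paper's $b_0, \dots, b_{q-1}$ with $b_{q-1}$ of length $d+r$). The complexity analysis also matches.
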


\begin{proof}
\emph{Correctness of Algorithm \ref{alg:nilpotent}}.
Let $A\subseteq T^n,\, b\in T^n$ be an instance of $\smp(T)$.

Case $b\not\in S^n$. Since $T/S$ is $d$-nilpotent, a product that is equal to $b$ cannot have more than $d-1$ factors.
Thus Algorithm \ref{alg:nilpotent} verifies in lines 
\ref{alg:nilpotent_l02} to \ref{alg:nilpotent_l08} 
whether there are 
$\ell < d$ and $a_1,\ldots,a_\ell \in A$
such that $b = a_1 \dotsm a_\ell$. 
In line \ref{alg:nilpotent_l05}, Algorithm \ref{alg:nilpotent} returns true if such factors exist.
Otherwise false is returned in line \ref{alg:nilpotent_l09}.

Case $b\in S^n$. Let $B$ be as defined in line \ref{alg:nilpotent_l11}. We claim that 
\begin{equation} \label{eq:nilpotent}
b\in\subuni{A} \text{ iff } b\in\subuni{B}.
\end{equation}
The ``if''-direction is clear. For the converse implication assume $b\in\subuni{A}$.
Then we have $\ell\in\N$ and $a_1,\dots,a_\ell\in A$ such that $b = a_1\cdots a_\ell$.
If $\ell < 2d$, then $b\in B$ and we are done. Assume $\ell \geq 2d$ in the following.
Let $q\in\N$ and $r\in\{0,\dots,d-1\}$ such that $\ell = qd+r$.
For $0\leq j \leq q-2$ define $b_j := a_{jd+1}\cdots a_{jd+d}$. 
Further $b_{q-1} := a_{(q-1)d+1}\dotsm a_{\ell}$.
Since $T/S$ is $d$-nilpotent, any product of $d$ or more elements from $A$ is in $S^n$. 
In particular $b_0,\dots, b_{q-1}$ are in $B$. Since \[ b = b_0 \cdots b_{q-1}, \]
we obtain $b\in\subuni{B}$. Hence~\eqref{eq:nilpotent} is proved.

Since Algorithm \ref{alg:nilpotent} returns $b\in\subuni{B}$ in line \ref{alg:nilpotent_l12},
its correctness follows from~\eqref{eq:nilpotent}. 

\emph{Complexity of Algorithm \ref{alg:nilpotent}}.
In lines \ref{alg:nilpotent_l02} to \ref{alg:nilpotent_l08}, 
the computation of each product $a_1 \dotsm a_\ell$ requires $n(\ell-1)$ multiplications in $S$. 
There are $|A|^\ell$ such products of length $\ell$.
Thus the number of multiplications in $S$ is at most $\sum_{\ell=2}^{d-1} n(\ell-1) |A|^\ell$. 
This expression is bounded by a polynomial of degree $d-1$ in the input size $n(|A|+1)$.

Similarly the size of $B$ and the effort for computing its elements is bounded by a polynomial of
degree $2d-1$ in $n(|A|+1)$.
Hence Algorithm \ref{alg:nilpotent} runs in polynomial time.
\end{proof}
\begin{clr}
The \smp{} for every finite nilpotent semigroup is in $\polyt$.
\end{clr}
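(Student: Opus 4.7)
The plan is to derive this directly from Theorem~\ref{thm:nilpextension}. Let $N$ be a finite nilpotent semigroup, say $d$-nilpotent, with zero element $0$. The singleton $\{0\}$ is a one-element ideal of $N$, and the Rees quotient $N/\{0\}$ is isomorphic to $N$ itself, which is $d$-nilpotent. Hence $N$ is an ideal extension of the trivial semigroup $\{0\}$ by the $d$-nilpotent semigroup $N$.

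With that setup in hand, I would invoke Theorem~\ref{thm:nilpextension} to reduce $\smp(N)$ in polynomial time to $\smp(\{0\})$. The latter is trivially in $\polyt$: on any input $(A,b)$ with $A\subseteq\{0\}^n$ and $b\in\{0\}^n$ we necessarily have $b=\mathbf{0}$, and the answer is positive precisely when $A$ is non-empty. Therefore $\smp(N)\in\polyt$. The only thing worth double-checking is that the degree of the polynomial produced by Algorithm~\ref{alg:nilpotent} depends only on the constant $d$, so the reduction is genuinely polynomial in the input size of $\smp(N)$; this is explicit in the complexity analysis already carried out in the proof of Theorem~\ref{thm:nilpextension}.

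I don't expect any real obstacle, since the statement is essentially a packaging of Theorem~\ref{thm:nilpextension}. If a self-contained argument is preferred over invoking that theorem, one could instead observe directly that every product of $d$ or more tuples in $N^n$ equals $\mathbf{0}$ coordinatewise, so the subsemigroup generated by $A\subseteq N^n$ consists of the at most $d|A|^{d-1}$ products of length less than $d$, together with $\mathbf{0}$ whenever $A$ is non-empty. Enumerating this list and checking whether $b$ appears runs in polynomial time because $d$ is a constant depending only on $N$.
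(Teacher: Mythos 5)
Your proposal is correct and follows the paper's own route: the paper proves this corollary by citing Theorem~\ref{thm:nilpextension} directly, and your instantiation with the trivial ideal $\{0\}$ (so that $\smp(\{0\})$ is trivially decidable) is exactly the intended reading. The self-contained enumeration argument you sketch at the end is a valid alternative but is not needed.
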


\begin{proof}
 Immediate from Theorem~\ref{thm:nilpextension}
\end{proof}

\section{Clifford semigroups}
\label{sec:clifford_nilpotent}

Clifford semigroups are also known as semilattices of groups. 
In this section we show that their \smp{} is in \polyt.
First we state some well-known facts on Clifford semigroups and establish some notation.

\begin{lma}[cf. {\cite[p.~12, Proposition 1.2.3]{Howie1995}}]
In a finite semigroup $S$, each $s \in S$ has an \emph{idempotent power} $s^m$ for some 
$m \in \mathbb{N}$, i.e., $(s^m)^2 = s^m$.
\end{lma}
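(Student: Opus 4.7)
The plan is to exploit finiteness of $S$ to force a collision among the powers of $s$, and then to identify a specific power that is idempotent from the resulting eventual periodicity.

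First, I would consider the infinite sequence $s, s^2, s^3, \ldots$ inside $S$. Since $S$ is finite, the pigeonhole principle yields integers $i < j$ with $s^i = s^j$. Setting $k := j - i$ and multiplying both sides on the right by $s$ repeatedly, one gets $s^n = s^{n+k}$ for every $n \geq i$; so the cyclic subsemigroup generated by $s$ is eventually periodic with period dividing $k$.

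Next, I would pick $m$ so that it lies in the periodic tail and is a multiple of the period. The simplest concrete choice is $m := ik$, which satisfies $m \geq i$ and $k \mid m$. Iterating the identity $s^{n+k} = s^n$ a total of $i$ times starting from $n = m$ yields $s^{m+ik} = s^m$, i.e. $s^{2m} = s^m$, which is $(s^m)^2 = s^m$ by associativity.

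There is essentially no obstacle here; the only small bookkeeping point is choosing $m$ simultaneously large enough to lie past the pre-period and divisible by $k$, which the choice $m = ik$ takes care of. The argument uses nothing beyond finiteness of $S$ and associativity of the semigroup operation.
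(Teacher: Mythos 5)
Your proof is correct and is the standard argument for this fact (the paper itself gives no proof, deferring to Howie, whose Proposition 1.2.3 proceeds by exactly this analysis of the index and period of the monogenic subsemigroup $\subuni{s}$). The choice $m=ik$ works: each of the $i$ reductions $s^{n+k}=s^{n}$ used to pass from $s^{2ik}$ to $s^{ik}$ lands on an exponent that is at least $ik\geq i$, so the bookkeeping checks out.
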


\begin{dfn}
\label{dfn:clifford}
A semigroup $\alg S$ is \emph{completely regular} if every $s \in S$ is contained in a subsemigroup of $\alg S$ which is a also a group.
A semigroup $\alg S$ is a \emph{Clifford semigroup} if it is completely regular and its idempotents are central. The latter condition may be expressed by
\begin{equation*}
\label{formula:idempotents_commute}
\forall e,s \in S \colon ( e^2 = e \Rightarrow es = se ) \text{.} 
\end{equation*}
\end{dfn}

\begin{dfn}
\label{dfn:strong_sl_of_grps}
Let $\langle I, \wedge \rangle$ be a semilattice.
For $i \in I$ let $\langle G_i, \cdot \rangle$ be a group.
For $i, j, k \in I$ with $i \geq j \geq k$ let 
$\phi_{i,j} \colon \alg G_i \rightarrow \alg G_j$
be group homomorphisms such that 
$\phi_{j,k} \circ \phi_{i,j} = \phi_{i,k}$
and $\phi_{i,i} = \id_{G_i}$.
Let $S := \dot{\bigcup}_{i \in I} G_i$, and 
\[ \text{for} \quad x \in G_i ,\, y \in G_j \quad \text{let} \quad
x*y := \phi_{i,i \wedge j}(x) \cdot \phi_{j,i \wedge j}(y). \]
Then we call $\langle S, * \rangle$ a \emph{strong semilattice of groups}.
\end{dfn}

\begin{thm}[Clifford, cf. {\cite[p.~106--107, Theorem 4.2.1]{Howie1995}} ] 
\label{thm:clifford_decomposition}
A semigroup is a strong semilattice of groups iff it is a Clifford semigroup.
\end{thm}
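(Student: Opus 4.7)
I will sketch proofs of both directions.

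\textbf{Strong semilattice of groups $\Rightarrow$ Clifford semigroup.} Let $S=\dot\bigcup_{i\in I}G_i$ be built from a semilattice $I$, groups $G_i$ with identities $e_i$, and connecting homomorphisms $\phi_{i,j}$ as in Definition \ref{dfn:strong_sl_of_grps}. First, since $\phi_{i,i}=\id_{G_i}$, the restriction of $*$ to $G_i$ coincides with the group operation of $G_i$, so each $G_i$ is a subsemigroup that is a group; hence $S$ is completely regular. Next, I would identify the idempotents of $S$: if $x\in G_i$ satisfies $x*x=x$, then $x\cdot x=x$ in $G_i$, forcing $x=e_i$, so the idempotents are exactly $\{e_i:i\in I\}$. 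For centrality, pick an idempotent $e_i$ and any $y\in G_j$; using that $\phi_{i,i\wedge j}(e_i)=e_{i\wedge j}$ (since group homomorphisms preserve identities), a direct computation gives $e_i*y=\phi_{j,i\wedge j}(y)=y*e_i$.

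\textbf{Clifford semigroup $\Rightarrow$ strong semilattice of groups.} Let $S$ be Clifford, let $E=E(S)$ be its set of idempotents, and recall that idempotents are central. The plan is to construct the ingredients of Definition \ref{dfn:strong_sl_of_grps} as follows.
\begin{enumerate}
\item Since idempotents commute, $E$ forms a semilattice under multiplication with $e\wedge f:=ef$; set $I:=E$ with $e\geq f$ iff $ef=f$.
\item For each $e\in E$, let $G_e$ be the maximal subgroup of $S$ with identity $e$. By complete regularity every $x\in S$ lies in some subgroup; the identity of that subgroup is an idempotent $e$ with $ex=xe=x$ and $x$ has an inverse in $G_e$. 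Uniqueness of $e$ (and hence disjointness of the $G_e$) follows because any two idempotents acting as identity on $x$ must coincide with $x^m$ for the idempotent power $m$.
\item For $e\geq f$ define $\phi_{e,f}\colon G_e\to G_f$ by $\phi_{e,f}(x):=xf$. Using centrality of $f$ and $ef=f$, check that $xf\in G_f$: indeed $(xf)(x^{-1}f)=xx^{-1}f^2=ef=f$, and $\phi_{e,f}$ is a homomorphism because $\phi_{e,f}(xy)=xyf=xyf^2=(xf)(yf)$. Obviously $\phi_{e,e}=\id_{G_e}$, and for $e\geq f\geq g$ we have $\phi_{f,g}\circ\phi_{e,f}(x)=xfg=xg=\phi_{e,g}(x)$.
\item Finally verify that the semigroup operation of $S$ agrees with the $*$-operation produced by this data. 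For $x\in G_i,\ y\in G_j$ and $k:=ij$, compute $(xk)(yk)=xkyk=xyk^2=xyk$ (centrality of $k$), and $xyk=xy\cdot ij=xy$ since $xi=x$ and $yj=y$. It remains to observe that $xy\in G_k$, which follows by checking that $k$ acts as an identity on $xy$ and that $y^{-1}x^{-1}$ is a two-sided inverse with respect to $k$.
\end{enumerate}

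\textbf{Main obstacle.} The forward direction is essentially bookkeeping. The main work is in step (2) of the converse: establishing that $S$ partitions into the maximal subgroups $G_e$ indexed by idempotents. Once this decomposition is in place together with the observation that $E$ is a semilattice, the definition of $\phi_{e,f}$ as multiplication by $f$ and the verification of the axioms go through smoothly using centrality of idempotents and the identity $f^2=f$.
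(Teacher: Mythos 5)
The paper does not prove this statement; it is quoted as Clifford's classical theorem with a citation to Howie, so there is no in-paper argument to compare against. Your sketch is the standard textbook construction (essentially the one in the cited reference) and is correct in both directions: the forward direction is indeed bookkeeping, and the converse correctly assembles the semilattice $E$ of idempotents, the maximal subgroups $G_e$, and the connecting maps $\phi_{e,f}(x)=xf$. The only point worth tightening is in your step (2): the "unique idempotent power" argument for disjointness uses finiteness (harmless in this paper, which treats finite semigroups, but the theorem as stated is general); a finiteness-free version is to take inverses $x'$, $x''$ of $x$ in the two subgroups with identities $e$, $f$ and compute $fe=f(xx')=xx'=e$ and $ef=e(xx'')=xx''=f$, whence $e=f$ by centrality of idempotents.
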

Note that the operation $*$ extends the multiplication of $G_i$ for each $i \in I$.
It is easy to see that $\{ G_i \mid i \in I \}$ are precisely the maximal subgroups of $S$.
Moreover, each Clifford semigroup inherits a preorder $\leq$ from the underlying semilattice.

\begin{dfn}
Let $\alg S$ be a Clifford semigroup constructed from a semilattice $\alg I$ and disjoint groups $\alg G_i$ for $i \in I$ as in Definition \ref{dfn:strong_sl_of_grps}.
For $x, y \in S$ define
\[ x \leq y \quad\text{if}\quad \exists i,j \in I \colon i \leq j, x \in G_i, y \in G_j. \]
\end{dfn}

\begin{lma}
\label{prp:clifford_preorder}
Let $\alg S$ be a Clifford semigroup and $x,y,z \in S$. Then
\begin{enumerate}
	\item \label{list:clifford_preorder_1}
	$x \leq yz$ iff $x \leq y$ and $x \leq z$,
	\item \label{list:clifford_preorder_2}
	$xyz \leq y$, and
	\item \label{list:clifford_preorder_3}
	$x \leq y$ and $y \leq x$ iff $x$ and $y$ are in the same maximal subgroup of $S$.
\end{enumerate}
\end{lma}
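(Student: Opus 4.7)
The plan is to invoke Clifford's decomposition theorem (Theorem~\ref{thm:clifford_decomposition}) and reduce every claim to a trivial statement in the underlying semilattice $\langle I,\wedge\rangle$. The single observation driving the whole argument is that if $x\in G_i$ and $y\in G_j$, then by Definition~\ref{dfn:strong_sl_of_grps} the product $x*y$ lies in $G_{i\wedge j}$, since both $\phi_{i,i\wedge j}(x)$ and $\phi_{j,i\wedge j}(y)$ belong to $G_{i\wedge j}$ which is closed under the group operation. Consequently, the map sending each element of $S$ to the unique index of the maximal subgroup containing it is a semigroup homomorphism onto $\langle I,\wedge\rangle$, and the relation $\leq$ on $S$ is exactly the pullback of the semilattice order on $I$.

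With this in hand, I would prove the three items in order. Fix $x\in G_a$, $y\in G_b$, $z\in G_c$. For~\eqref{list:clifford_preorder_1}, the product $yz$ lies in $G_{b\wedge c}$, so $x\leq yz$ unfolds to $a\leq b\wedge c$ in $I$; by the defining property of meets this holds iff $a\leq b$ and $a\leq c$, which is exactly $x\leq y$ and $x\leq z$. For~\eqref{list:clifford_preorder_2}, the product $xyz$ lies in $G_{a\wedge b\wedge c}$, and $a\wedge b\wedge c\leq b$ in the semilattice, giving $xyz\leq y$. For~\eqref{list:clifford_preorder_3}, antisymmetry of $\leq$ on the semilattice $I$ forces $a=b$ whenever $a\leq b$ and $b\leq a$, so $x$ and $y$ lie in the same $G_a$; conversely, if $x,y\in G_a$ then $a\leq a$ gives $x\leq y$ and $y\leq x$.

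There is no genuine obstacle here: once the homomorphism $S\to I$ from Clifford's theorem is in place, each item is a one-line semilattice computation. The only thing to be mindful of is that the statement of the lemma is about the preorder on $S$ rather than about $I$ directly, so I would briefly record at the outset that $x\leq y$ in $S$ depends only on the indices of the groups containing $x$ and $y$, which makes the transfer between $S$ and $I$ unambiguous and justifies calling $\leq$ a preorder (reflexive and transitive, but not antisymmetric, precisely because of~\eqref{list:clifford_preorder_3}).
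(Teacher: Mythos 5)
Your proposal is correct, and since the paper dismisses this lemma with the single word ``Straightforward,'' your argument is exactly the routine verification the authors have in mind: the definition of $\leq$ already lives on the index semilattice $I$, the strong-semilattice product sends $G_i\times G_j$ into $G_{i\wedge j}$, and all three items then reduce to elementary facts about meets. Nothing further is needed.
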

\begin{proof}Straightforward.\end{proof}

The following mapping will help us solve the \smp{} for Clifford semigroups.
\begin{dfn}
Let $\alg S$ be a finite Clifford semigroup constructed from a semilattice $\alg I$ and disjoint groups $\alg G_i$ for $i \in I$ as in Definition \ref{dfn:strong_sl_of_grps}.
Let 
\[ \gamma \colon 
S \rightarrow \prod_{i \in I} G_i \quad\text{such that}\quad \gamma(s)(i) := 
\begin{cases}
s & \text{if } s \in G_i \text{,} \\
1_{G_i} & \text{otherwise}
\end{cases} \]
for $s \in S$ and $i \in I$.
\end{dfn}

Here $\prod$ denotes the direct product and $1_{G_i}$ the identity of the group $G_i$ for $i \in I$.
Note that the mapping $\gamma$ is not necessarily a homomorphism.

\begin{algorithm}
\caption{
\newline{}For a Clifford semigroup $S = \dot\bigcup_{i \in I} G_i$,
reduce $\smp( S )$ to $\smp( \prod_{i \in I} G_i )$.
}
\label{alg:clifford}
\begin{algorithmic}[1]
\Input $A \subseteq S^n,\, b \in S^n$.
\Output{ True if $b \in \subuni{ A }$, false otherwise. }
		\State Set $\{ a_1,\ldots,a_k \} := 
						\{ a \in A \mid \forall i \in \oneto{n} \colon a(i) \geq b(i) \}$
		\label{alg:clifford:a1ak}
		\State Set $e$ to the idempotent power of $b$.
		\label{alg:clifford:l2}
		\If{ $\exists i \in \oneto{n} \colon e(i) \notin \subuni{ a_1(i),\ldots,a_k(i) }$}
		\label{alg:clifford:exist_e}	
			\State \Return{ false }
			\label{alg:clifford:ret1}	
		\EndIf
			\State \Return{ $\gamma(b) \in \subuni{ \gamma(a_1 e),\ldots,\gamma(a_k e) }$ }
		\label{alg:clifford:l06}
		\Comment{instance of $\smp( \prod_{i \in I} \alg G_i )$ }
\end{algorithmic}
\end{algorithm}

\begin{thm}
\label{thm:smp_clifford}
Let $\alg S$ be a finite Clifford semigroup with maximal subgroups $G_i$ for $i \in I$.
Then Algorithm \ref{alg:clifford} reduces $\smp(\alg S)$ to $\smp(\prod_{i \in I} G_i)$ in 
polynomial time. The latter is the \smp{} of a group.
\end{thm}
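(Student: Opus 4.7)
The plan is to verify Algorithm~\ref{alg:clifford} stage by stage, with the main technical step being a lemma that, once the coordinate-wise test in line~\ref{alg:clifford:exist_e} is passed, the full tuple $e \in S^n$ already lies in $\langle a_1,\dots,a_k\rangle$. Throughout I use Lemma~\ref{prp:clifford_preorder} and the centrality of idempotents in a Clifford semigroup (Definition~\ref{dfn:clifford}).

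I first note two easy necessary conditions. By Lemma~\ref{prp:clifford_preorder}\ref{list:clifford_preorder_1}, any product of generators from $A$ equal to $b$ in $S^n$ uses only generators $a$ with $a(l)\geq b(l)$ for all $l\in\oneto{n}$; hence the filtering in line~\ref{alg:clifford:a1ak} is lossless, i.e., $b \in \langle A\rangle$ iff $b \in \langle a_1,\dots,a_k\rangle$. Moreover if $b$ is so generated, then at each coordinate $l$ we have $b(l) \in \langle a_1(l),\dots,a_k(l)\rangle$, and since the idempotent power $e(l)$ is a power of $b(l)$ it lies there too; so the rejection in line~\ref{alg:clifford:ret1} is justified.

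The key lemma --- and the step I expect to be the main obstacle --- is that, under the hypothesis of line~\ref{alg:clifford:exist_e}, $e$ itself is in $\langle a_1,\dots,a_k\rangle$. To prove it, for each coordinate $l$ pick a word $W_l$ over $\{a_1,\dots,a_k\}$ with $W_l(l) = e(l)$ and replace it by a large common power $E_l := W_l^N$ so that every coordinate of $E_l$ is an idempotent of $S$. Filtering gives $a_j(l') \geq b(l') \geq e(l')$ for all $j, l'$, hence $E_l(l') \geq e(l')$ in the semilattice of idempotents, and $E_l(l) = e(l)$. Since idempotents of a Clifford semigroup are central and their product equals their semilattice meet, the product $E := E_1 E_2 \cdots E_n \in \langle a_1,\dots,a_k\rangle$ satisfies $E(l') = \bigwedge_l E_l(l')$, which is bounded above by the factor $E_{l'}(l') = e(l')$ and also bounded below by $e(l')$ (the meet of elements each above $e(l')$ is still above $e(l')$), so $E = e$.

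With the key lemma in hand, the final equivalence $b \in \langle a_1,\dots,a_k\rangle$ iff $\gamma(b) \in \langle \gamma(a_1 e),\dots,\gamma(a_k e)\rangle$ in $(\prod_{i\in I} G_i)^n$ follows cleanly. Forward: since $be=b$ and $e$ is central and idempotent, any $b = a_{i_1}\cdots a_{i_m}$ can be rewritten as $b = (a_{i_1}e)(a_{i_2}e)\cdots(a_{i_m}e)$; at every coordinate $l$ all factors lie in the single maximal subgroup containing $b(l)$, on which $\gamma$ restricts to an injective group homomorphism into $\prod_{i\in I} G_i$, so $\gamma(b) = \gamma(a_{i_1}e)\cdots\gamma(a_{i_m}e)$. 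Backward: reading each coordinate $l$ of such a $\gamma$-product at the index of the maximal subgroup containing $b(l)$ recovers $b = (a_{i_1}e)\cdots(a_{i_m}e) = a_{i_1}\cdots a_{i_m}\cdot e$ in $S^n$, and replacing the final $e$ by the word producing it from the key lemma puts $b$ into $\langle a_1,\dots,a_k\rangle$. Each algorithmic step (the filtering, computing the idempotent power $e$, and the coordinate-wise membership checks in the fixed semigroup $S$) is polynomial in the input size $n(k+1)$, and the output is an instance of $\smp(\prod_{i\in I} G_i)$ of size $O(n(k+1))$ over a fixed finite group, which is in \polyt{}; this gives the claimed polynomial-time reduction.
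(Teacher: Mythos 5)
Your proof is correct and takes essentially the same route as the paper's: the same lossless filtering via Lemma~\ref{prp:clifford_preorder}\lref{list:clifford_preorder_1}, the same key lemma that $e\in\subuni{a_1,\dots,a_k}$ (the paper takes the idempotent power of a single product $d_1\cdots d_n$ of coordinate witnesses, while you take idempotent powers of each witness word first and then multiply --- an equivalent computation), and the same passage to $\prod_{i\in I}G_i$ by multiplying the generators by $e$ and applying $\gamma$, which restricts to a monomorphism on the relevant maximal subgroups. Your explicit use of the key lemma to justify the backward direction of $b\in\subuni{a_1,\dots,a_k}$ iff $b\in\subuni{a_1e,\dots,a_ke}$ is a minor clarification of a step the paper leaves implicit.
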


\begin{proof}
\emph{Correctness of Algorithm \ref{alg:clifford}}.
Assume $\alg S = \subuni{ \dot{\bigcup}_{i \in I} G_i , \cdot }$ as in Definition \ref{dfn:strong_sl_of_grps}.
Fix an instance $A \subseteq S^n,\, b \in S^n$ of $\smp(\alg S)$.
Let $a_1,\ldots,a_k$ be as defined in line \ref{alg:clifford:a1ak} 
of Algorithm \ref{alg:clifford}.

First we claim that 
\begin{equation}
\label{eq:claim_alg_cliff} 
b \in \subuni{ A }
\quad \text{iff} \quad 
b \in \subuni{ a_1,\ldots,a_k }.
\end{equation}
To this end, assume that $b = c_1 \dotsm c_m$ for $c_1,\ldots,c_m \in A$.
Fix $j \in \oneto{m}$.
Lemma \ref{prp:clifford_preorder}\lref{list:clifford_preorder_1} implies that 
$b(i) \leq c_j(i)$ for all $i \in \oneto{n}$. 
Thus $c_j \in \{ a_1,\ldots,a_k \}$.
Since $j$ was arbitrary, we have $c_1,\ldots,c_m \in \{ a_1,\ldots,a_k \}$
and \eqref{eq:claim_alg_cliff} follows.

Let $e$ be the idempotent power of $b$.
If the condition in line \ref{alg:clifford:exist_e} of Algorithm \ref{alg:clifford} is fulfilled,
then neither $e$ nor $b$ are in $\subuni{ a_1,\ldots,a_k }$. 
In this case false is returned in line \ref{alg:clifford:ret1}.
Now assume the condition in line \ref{alg:clifford:exist_e} is violated, i.e., 
\begin{equation*}
	\forall i \in \oneto{n} \colon e(i) \in \subuni{ a_1(i),\ldots,a_k(i) }. 
\end{equation*}
We claim that
\begin{equation}
\label{thm:smp_clifford_formula_0}
e \in \subuni{ a_1,\ldots,a_k }.
\end{equation}
For each $i \in \oneto{n}$ let $d_i \in \subuni{ a_1,\ldots,a_k }$ such that $d_i(i) = e(i)$. 
Further let $f$ be the idempotent power of $d_1 \dotsm d_n$.
We show $f = e$. 
Fix $i \in \oneto{n}$.
Since $d_i(i) = e(i)$, we have $f(i) \leq e(i)$ by Lemma \ref{prp:clifford_preorder}\lref{list:clifford_preorder_2}.
On the other hand, $e(i) \leq b(i) \leq a_j(i)$ for all $j \leq k$.
Hence $e(i) \leq f(i)$ by multiple applications of Lemma \ref{prp:clifford_preorder}\lref{list:clifford_preorder_1}.
Thus $f(i)$ and $e(i)$ are idempotent and are in the same group by Lemma \ref{prp:clifford_preorder}\lref{list:clifford_preorder_3}.
So $e(i) = f(i)$.
This yields $e = f$ and thus
\eqref{thm:smp_clifford_formula_0} holds.

Next we show 
\begin{equation}
\label{thm:smp_clifford_formula_1}
b \in \subuni{ a_1,\ldots,a_k }
\quad \text{iff} \quad 
b \in \subuni{ a_1e,\ldots,a_ke }.
\end{equation}
If $b = c_1 \dotsm c_m$ for $c_1,\ldots,c_m \in \{ a_1,\ldots,a_k \}$, then 
$b = be = c_1 \dotsm c_m e = (c_1e) \dotsm (c_me)$ 
since idempotents are central in Clifford semigroups. 
This proves \eqref{thm:smp_clifford_formula_1}.

Next we claim that 
\begin{equation}\label{thm:smp_clifford_formula_2}
b \in \subuni{a_1e,\ldots,a_ke}
\quad \text{iff} \quad 
\gamma(b) \in \subuni{\gamma(a_1e),\ldots,\gamma(a_ke)}.
\end{equation}
Fix $i \in \oneto{n}$.
By Lemma \ref{prp:clifford_preorder}\lref{list:clifford_preorder_3} the elements $a_1e(i),\ldots,a_ke(i)$, and $b(i)$ all lie in the same group, say $\alg G_l$. 
Note that $\gamma|_{G_l} \colon \alg G_l \rightarrow \prod_{i \in I} \alg G_i$ is a semigroup monomorphism.
This means that the componentwise application of $\gamma$ to $\subuni{ a_1e,\ldots,a_ke, b }$, namely
\[ \gamma |_\subuni{ a_1e,\ldots,a_ke, b } \colon \subuni{ a_1e,\ldots,a_ke, b } 
\rightarrow ( \prod_{i \in I} \alg G_i )^n, \] 
is also a semigroup monomorphism. 
This implies \eqref{thm:smp_clifford_formula_2}.

In line \ref{alg:clifford:l06}, the question whether 
$\gamma(b) \in \subuni{\gamma(a_1e),\ldots,\gamma(a_ke)}$ is an instance of 
$\smp( \prod_{i \in I} \alg G_i )$, which is the \smp{} of a group.
By \eqref{eq:claim_alg_cliff}, \eqref{thm:smp_clifford_formula_1}, and 
\eqref{thm:smp_clifford_formula_2}, Algorithm \ref{alg:clifford} returns true iff 
$b \in \subuni{ A }$.

\emph{Complexity of Algorithm \ref{alg:clifford}}.
Line \ref{alg:clifford:a1ak} requires at most $\mathcal{O}( n|A| )$ calls of the relation $\leq$. 
For line \ref{alg:clifford:l2}, 
let $(s_1,\ldots,s_{|S|})$ be a list of the elements of $S$ and 
let $v \in \mathbb{N}$ minimal such that $(s_1,\ldots,s_{|S|})^v$ is idempotent.
Then $e = b^v$. 
Since $v$ only depends on $\alg S$ but not on $n$ or $|A|$,
computing $e$ takes $\mathcal{O}( n )$ steps.
Line \ref{alg:clifford:exist_e} requires $\mathcal{O}( n|A| )$ steps.
Altogether the time complexity of Algorithm \ref{alg:clifford} is $\mathcal{O}( n|A| )$.
\end{proof}

\begin{clr}
\label{clr:clifford}
The \smp{} for finite Clifford semigroups is in \polyt{}.
\end{clr}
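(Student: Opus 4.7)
The plan is to deduce the corollary directly from Theorem~\ref{thm:smp_clifford} together with the known polynomial-time solvability of $\smp$ for finite groups. Since the corollary is an almost immediate consequence of the machinery already built, the proof will essentially be a chaining of reductions.

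First I would invoke Theorem~\ref{thm:smp_clifford}, which asserts that for any finite Clifford semigroup $\alg S$ with maximal subgroup decomposition $S = \dot\bigcup_{i\in I} G_i$, Algorithm~\ref{alg:clifford} reduces $\smp(\alg S)$ to $\smp(\prod_{i\in I} G_i)$ in polynomial time. Next I would observe that the direct product $\prod_{i\in I} G_i$, being a finite direct product of finite groups, is itself a finite group; this is the trivial but essential point that lets us leave the semigroup world.

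Then I would appeal to the folklore fact, already cited in the introduction, that $\smp$ for any finite group lies in \polyt{} via the adaptation of Sims' stabilizer chain methods (see~\cite{Furst1980,Zweckinger2013}). Composing the polynomial-time reduction from Theorem~\ref{thm:smp_clifford} with the polynomial-time algorithm for $\smp$ of the group $\prod_{i\in I} G_i$ yields a polynomial-time algorithm for $\smp(\alg S)$, which is exactly the claim of the corollary.

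There is no genuine obstacle here: Theorem~\ref{thm:smp_clifford} does the substantive work, and the only thing to verify in passing is that the target of the reduction really is an instance of group $\smp$, which follows from the fact that finite products of finite groups are finite groups. So the proof reduces to a one-line citation of Theorem~\ref{thm:smp_clifford} combined with the known complexity of group $\smp$.
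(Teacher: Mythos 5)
Your proposal is correct and follows the paper's own argument exactly: reduce via Theorem~\ref{thm:smp_clifford} to the \smp{} of the finite group $\prod_{i\in I} G_i$ and then invoke the polynomial-time group algorithm from \cite{Furst1980,Zweckinger2013}. The paper merely adds explicit bounds (the group has size at most $|S|^{|S|}$ and the overall running time is $\mathcal{O}(n^3 + n|A|)$), which your version omits but does not need for the statement as given.
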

\begin{proof}
Let $S$ be a finite Clifford semigroup.
Fix an instance $A \subseteq S^n,\, b \in S^n$ of $\smp(\alg S)$.
Algorithm \ref{alg:clifford} converts this instance into one of the \smp{} of a group with maximal size of $|S|^{|S|}$ in $\mathcal{O}( n|A| )$ time.
Both instances have input size $n(|A| + 1)$.
The latter can be solved by Willard's modification \cite{Willard2007} of the concept of strong generators, known from the permutation group membership problem \cite{Furst1980}.
This requires $\mathcal{O}( n^3 + n|A| )$ time according to \cite[p. 53, Theorem 3.4]{Zweckinger2013}.
Hence $\smp(\alg S)$ is decidable in $\mathcal{O}( n^3 + n|A| )$ time.
\end{proof}

\begin{clr}
\label{thm:clifford_nilpotent}
Let $S$ be a finite ideal extension of a Clifford semigroup by a nilpotent semigroup.
Then $\smp(S)$ is in \polyt{}.
\end{clr}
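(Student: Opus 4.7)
The plan is to simply compose the two reductions that have already been established in the preceding sections. Let $S$ be a finite ideal extension of a Clifford semigroup $C$ by a nilpotent semigroup $N$, so $C$ is an ideal of $S$ and $S/C \cong N$ is nilpotent.

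First, I would invoke Theorem \ref{thm:nilpextension} with $T := S$ and ideal $C$: this gives a polynomial-time reduction of $\smp(S)$ to $\smp(C)$ via Algorithm \ref{alg:nilpotent}. Concretely, on input $A \subseteq S^n, b \in S^n$, the algorithm either resolves the case $b \notin C^n$ directly by enumerating all products of length less than the nilpotency degree $d$ of $N$, or, when $b \in C^n$, produces the set $B := \{a_1 \cdots a_k \in C^n \mid k < 2d,\ a_i \in A\}$ and reduces to deciding $b \in \langle B \rangle$ in $C^n$. Both $|B|$ and the effort of computing $B$ are polynomial in the input size, with degree depending only on the fixed constant $d$.

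Second, I would feed the resulting instance into the procedure for Clifford semigroups: by Corollary \ref{clr:clifford}, $\smp(C)$ is in \polyt. Composing a polynomial-time reduction with a polynomial-time algorithm yields a polynomial-time algorithm for $\smp(S)$.

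There is no real obstacle here, since both ingredients are already proved; the only thing to check is that the polynomial blow-up from Theorem \ref{thm:nilpextension} (which is of degree $2d-1$ in $n(|A|+1)$) is then fed into the $\mathcal{O}(n^3 + n|A|)$ algorithm from Corollary \ref{clr:clifford}, and the composition remains polynomial. Thus the proof reduces to the single sentence: apply Theorem \ref{thm:nilpextension} followed by Corollary \ref{clr:clifford}.
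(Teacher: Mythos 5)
Your proposal is correct and matches the paper's own argument, which consists precisely of invoking Theorem \ref{thm:nilpextension} to reduce $\smp(S)$ to $\smp(C)$ and then applying Corollary \ref{clr:clifford}. The extra bookkeeping you do about composing the polynomial bounds is fine but not needed beyond the standard fact that a polynomial-time reduction composed with a polynomial-time algorithm is polynomial.
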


\begin{proof}
By Theorem \ref{thm:nilpextension} and Corollary \ref{clr:clifford}.
\end{proof}

In the next lemma we give some conditions equivalent to the fact that a semigroup is
an ideal extension of a Clifford semigroup by a nilpotent semigroup.

\begin{lma}
\label{lma:tfae_clifford_nilpotent}
Let $\alg S$ be a finite semigroup. 
Then the following are equivalent:
\begin{enumerate}
	\item \label{lma:tfae_clifford_nilpotent_list_2}
	$S$ is an ideal extension of a Clifford semigroup $C$ by a nilpotent semigroup $N$;
	\item \label{lma:tfae_clifford_nilpotent_list_3}
	the ideal $I$ generated by the idempotents of $\alg S$ is a Clifford semigroup;
	\item \label{lma:tfae_clifford_nilpotent_list_3b}
	all idempotents in $S$ are central, and for every idempotent $e \in S$ and every $a \in S$ where $ea = a$ the element $a$ generates a group;
	\item \label{lma:tfae_clifford_nilpotent_list_1b}
	$S$ embeds into the direct product of a Clifford semigroup $\alg C$ and a nilpotent semigroup $N$.
\end{enumerate}
\end{lma}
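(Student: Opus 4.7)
The plan is first to close a short cycle establishing the equivalences among $(1)$, $(2)$, and $(3)$, and then to handle $(1) \Leftrightarrow (4)$ separately.

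For $(1) \Rightarrow (2)$: a nilpotent semigroup has $0$ as its only idempotent, so every idempotent of $S$ lies in the Clifford ideal $C$; conversely each $c \in C$ lies in a maximal subgroup $G_i$ of $C$ with identity $e := 1_{G_i} \in E(S)$, and $c = ec$ places $c$ in the ideal $I$ generated by $E(S)$. Hence $I = C$ is Clifford. For $(2) \Rightarrow (1)$: the Rees quotient $S/I$ has only $[I]$ as an idempotent (any other would lift to an idempotent of $S$ outside $I$), so $S/I$ is a finite nil semigroup, hence nilpotent by the standard compactness argument also invoked in Section 3. For $(2) \Rightarrow (3)$: given $e \in E(S) \subseteq I$ and $s \in S$, both $es$ and $se$ lie in $I$, and the centrality of $e$ in the Clifford semigroup $I$ together with associativity gives
\[ es \;=\; e(es) \;=\; (es)e \;=\; e(se) \;=\; (se)e \;=\; se; \]
moreover $ea = a$ forces $a \in I$, and every element of the Clifford semigroup $I$ generates a cyclic subgroup. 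For $(3) \Rightarrow (2)$: centrality implies $I = \bigcup_{e \in E(S)} eS$; each $a = es \in I$ satisfies $ea = a$ and hence generates a group by the second part of $(3)$, so $I$ is completely regular, and its idempotents are central in $S$ and therefore in $I$, making $I$ Clifford.

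For $(1) \Rightarrow (4)$, I combine the Rees projection $\pi \colon S \to N = S/C$ with a family of conjugation homomorphisms $\sigma_e \colon S \to C$, $\sigma_e(s) := ese$, one for each $e \in E(S)$. Each $\sigma_e$ is a homomorphism because, using $e^2 = e$ and the centrality $et = te$ that follows from the already-established $(1) \Rightarrow (3)$,
\[ \sigma_e(s)\sigma_e(t) \;=\; eseete \;=\; es(et)e \;=\; es(te)e \;=\; este \;=\; \sigma_e(st), \]
and its image lies in $C$ since $e \in C$ is in the ideal. Bundling into $\sigma \colon S \to C^{|E(S)|}$ with $\sigma(s) := (\sigma_e(s))_{e \in E(S)}$ and observing that $\sigma_{1_{G_i}}(s) = s$ for any $s \in G_i \subseteq C$ shows that $\sigma|_C$ is injective. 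The combined map $(\sigma, \pi) \colon S \to C^{|E(S)|} \times N$ is then injective: two elements sharing an image under $\pi$ are either both in $C$ (and hence distinguished by $\sigma$) or they coincide, since the Rees quotient is faithful outside the zero class. As a finite direct power of a Clifford semigroup is Clifford, this yields $(4)$.

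For $(4) \Rightarrow (1)$, identify $S$ with its image in $C' \times N'$ and set $K := S \cap (C' \times \{0_{N'}\})$. Being the preimage of $\{0_{N'}\}$ under the projection $S \to N'$, $K$ is an ideal of $S$, and as a subsemigroup of the Clifford semigroup $C' \cong C' \times \{0_{N'}\}$ it inherits centrality of idempotents; every cyclic subsemigroup of $K$ is contained in a finite maximal subgroup of $C'$ and is therefore a cyclic subgroup (the inverse $k^{p-1}$ already lies in $\langle k \rangle \subseteq K$), so $K$ is completely regular and hence Clifford. In the Rees quotient $S/K$, any idempotent $(c,n) \in S \setminus K$ would force $n^2 = n \neq 0$ in $N'$, impossible for the nilpotent $N'$. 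Thus $S/K$ is a finite nil, hence nilpotent, semigroup, and $(1)$ follows.

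I expect the main obstacle to be the construction in $(1) \Rightarrow (4)$: a single conjugation $s \mapsto ese$ collapses each maximal subgroup $G_j$ of $C$ into $G_{i \wedge j}$ (where $e = 1_{G_i}$) and so is not injective on $C$, which is why one must aggregate over all idempotents simultaneously. The homomorphism property of $\sigma_e$ also depends crucially on the centrality of $e$, which forces us to route through $(3)$ before attempting the embedding. The remaining implications are bookkeeping about ideals, Rees quotients, and the folklore fact that a finite nil semigroup is nilpotent.
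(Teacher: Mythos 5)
Your proof is correct, but the load-bearing implication is handled by a genuinely different construction than the paper's. The equivalences among (1), (2), (3) match the paper almost verbatim (the same chain $es=e(es)=(es)e=e(se)=(se)e=se$ for centrality, the same $I=C$ argument), except that you close the triangle with the extra implications $(2)\Rightarrow(1)$ and $(3)\Rightarrow(2)$ via the pigeonhole fact that a finite nil semigroup is nilpotent; the paper instead runs a single cycle $(1)\Rightarrow(2)\Rightarrow(3)\Rightarrow(4)\Rightarrow(1)$ and uses that pigeonhole argument only once, inside its proof of $(3)\Rightarrow(4)$. The real divergence is the embedding. The paper proves $(3)\Rightarrow(4)$ directly by taking $k$ with $x^k$ idempotent for all $x$, showing $\alpha(x)=x^{k+1}$ is an idempotent endomorphism, setting $C=\alpha(S)$ (which it proves is an ideal with $S/C$ nilpotent), and embedding via $s\mapsto(\alpha(s),s/C)$; injectivity on $C$ is free because $\alpha^2=\alpha$ makes $\alpha$ the identity on its image, so a single Clifford factor suffices and the target $C\times S/C$ has size bounded by $|S|^2$. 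You instead assume (1) already established and aggregate the conjugations $\sigma_e(s)=ese$ over all idempotents into $C^{|E(S)|}\times S/C$ -- correctly diagnosing that one conjugation alone collapses each $G_j$ into $G_{i\wedge j}$ and so cannot be injective on $C$. Both routes are sound; the paper's power map buys a smaller target and derives the ideal decomposition (1) as a byproduct rather than a prerequisite, while your version trades that for a more transparent "separate the points of $C$ by enough retractions" argument. Two cosmetic cautions: your phrase "finite maximal subgroup of $C'$" in $(4)\Rightarrow(1)$ is not literally justified if $C'$ is infinite -- what saves you (and the paper, which is equally terse here) is that $\langle k\rangle\subseteq S$ is finite, hence a group once it sits inside any group; and "compactness argument" is an odd name for what is just a pigeonhole count of partial products.
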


\begin{proof}
$(\ref{lma:tfae_clifford_nilpotent_list_2}) \Rightarrow 
(\ref{lma:tfae_clifford_nilpotent_list_3})$: 
We show $I = C$. Since $S \setminus C$ cannot contain idempotent elements, all idempotents are in the ideal $C$. Thus we have $I \subseteq C$. Now let $c \in C$. Let $e \in I$ be the idempotent power of $c$. Then $c = ce \in I$. So $C \subseteq I$.

$(\ref{lma:tfae_clifford_nilpotent_list_3}) \Rightarrow (\ref{lma:tfae_clifford_nilpotent_list_3b})$: 
First we claim that all idempotents are central in $S$.
To this end, let $e \in S$ be idempotent and $a \in S$.
Then 
\begin{align*}
ae 
&= (ae)e \\
&= e(ae) \qquad \text{since $e,ae \in I$ and $e$ is central in $I$,} \\
&= (ea)e \\
&= e(ea) \qquad \text{since $e,ea \in I$ and $e$ is central in $I$,} \\
&= ea \text{.}
\end{align*}
Next assume that $ea = a$. Since $ea \in I$, we have that $\subuni{ a } = \subuni{ ea }$ is a group.

$(\ref{lma:tfae_clifford_nilpotent_list_3b}) \Rightarrow 
(\ref{lma:tfae_clifford_nilpotent_list_1b})$: 
Let $k \in \mathbb{N}$ such that $x^k$ is idempotent for each $x \in S$. 
For $x \in S$ and an idempotent $e \in S$ we have
\begin{equation}
\label{eq:thm:tfae_cn_eq1}
ex = (ex)^{k+1} = ex^{k+1}
\end{equation}
since $\subuni{ ex }$ is a group and idempotents are central.
We claim that 
\begin{equation}
\label{eq:thm:tfae_cn_eq1.5}
\alpha \colon S \rightarrow S,\, x \mapsto x^{k+1}
\quad
\text{is a homomorphism with} 
\quad
\alpha^2 = \alpha \text{.}
\end{equation}
For $x,y \in S$,
\renewcommand{\arraystretch}{1.4}
\begin{equation*}
\begin{array}{rll}
\label{eq:thm:tfae_cn_eq2}
(xy)^{k+1} \!\!\!\!\!
&= (xy)^kxy \\
&= (xy)^kx^{k+1}y &\quad \text{by \eqref{eq:thm:tfae_cn_eq1} since $(xy)^k$ is idempotent,} \\
&= (xy)^kx^{k+1}y^{k+1} &\quad \text{by \eqref{eq:thm:tfae_cn_eq1} since $x^k$ is idempotent,} \\
&= (xy)^{k+1}x^ky^k &\quad \text{since $x^k, y^k$ are central,} \\
&= xyx^ky^k &\quad \text{by \eqref{eq:thm:tfae_cn_eq1} since $x^k$ is idempotent,} \\
&= x^{k+1}y^{k+1} &\quad \text{since $x^k, y^k$ are central.} \\
\end{array}
\end{equation*}
Also,
\begin{equation*}
\label{eq:thm:tfae_cn_eq3}
(x^{k+1})^{k+1} = x^{k^2+2k+1} = x^{k+1} \text{.}
\end{equation*}
This proves \eqref{eq:thm:tfae_cn_eq1.5}.
Let $C := \alpha( S )$.
We claim that $C$ is an ideal.
For $x, y \in S \cup \{ 1 \}$ and $z^{k+1} \in C$, 
\begin{equation*}
\begin{array}{rll}
\label{eq:thm:tfae_cn_eq4}
xz^{k+1}y \!\!\!\!\!
&= xzyz^k &\quad \text{since $z^k$ is central,} \\
&= (xzy)^{k+1}z^k &\quad \text{by \eqref{eq:thm:tfae_cn_eq1},} \\
&=( xz^{k+1}y )^{k+1} &\quad \text{since $z^k$ is central and idempotent,} \\
&\in C \text{.}
\end{array}
\end{equation*}
\renewcommand{\arraystretch}{1.0}
Now consider the Rees quotient $N := S/C$. 
We claim that
\begin{equation}
\label{eq:thm:tfae_cn_eq5}
\text{$N$ is $|N|$-nilpotent.}
\end{equation}
Let $n_1,\ldots,n_{|N|} \in S$. First assume
\begin{equation}\label{eq:N-N-nilpotent}
\exists i,j \in \{ 1,\ldots,|N| \},\, i < j \colon n_1 \cdots n_i = n_1 \cdots n_j \text{.}
\end{equation}
Then $n_{i+1} \cdots n_j$ is a right identity of $n_1 \cdots n_i$. Thus
\[ n_1 \cdots n_i = n_1 \cdots n_i (n_{i+1} \cdots n_j)^{k+1} \in C \]
since $C$ is an ideal. So $n_1 \cdots n_{|N|} \in C$. 

If \eqref{eq:N-N-nilpotent} does not hold, then 
$n_1, n_1 n_2,\ldots,n_1 \cdots n_{|N|}$ 
are $|N|$ distinct elements and at least one of them is in $C$.
Again $n_1 \cdots n_{|N|} \in C$ by the ideal property of $C$.
This proves \eqref{eq:thm:tfae_cn_eq5}.
Now let
\[ \beta \colon S \rightarrow C \times N,\, s \mapsto ( \alpha(s), s/C ). \]
Apparently $\beta$ is a homomorphism.
It remains to prove that $\beta$ is injective.
Assume $\beta(x) = \beta(y)$ for $x,y \in S$.
If $x \notin C$, then also $y \notin C$. 
Now $x/C = y/C$ implies $x = y$.
Assume $x \in C$. 
Then $x = \alpha(x) = \alpha(y) = y$ since $\alpha^2 = \alpha$.
We proved item (\ref{lma:tfae_clifford_nilpotent_list_1b}) of Lemma \ref{lma:tfae_clifford_nilpotent}.

$(\ref{lma:tfae_clifford_nilpotent_list_1b}) \Rightarrow
 (\ref{lma:tfae_clifford_nilpotent_list_2})$:
Assume $S \leq C \times N$.
Then $J := S \cap ( C \times \{ 0 \} )$ is an ideal of $S$.
At the same time $J$ is a subsemigroup of a Clifford semigroup.
By Definition \ref{dfn:clifford} also $J$ is a Clifford semigroup.
It is easy to see that the Rees quotient $N_1 := S / J$ is nilpotent.
Thus $S$ is an ideal extension of the Clifford semigroup $J$ by the nilpotent semigroup $N_1$.
\end{proof}

\section{Commutative semigroups}
\label{sec:SMP_commut}

The main result of Section \ref{sec:clifford_nilpotent} was that 
ideal extensions of Clifford semigroups by nilpotent semigroups
have the \smp{} in \polyt. 
In this section we show that if a commutative semigroup does not have this property, 
then its \smp{} is \np-complete. 
This will complete the proof of our dichotomy result, Theorem \ref{thm:tfae_clifford_nilpotent_commut}.

First we give an upper bound on the complexity of the \smp{} for commutative semigroups.
\begin{lma}
\label{lma:smp_commutative_semigroups}
The \smp{} for a finite commutative semigroup is in \np.
\end{lma}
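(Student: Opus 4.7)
The plan is to provide a short NP-certificate. Specifically, given an input $A = \{a_1, \ldots, a_k\} \subseteq S^n$ and $b \in S^n$, I will argue that $b \in \subuni{A}$ iff there is a tuple of exponents $(c_1, \ldots, c_k) \in \mathbb{N}_0^k$ with $\sum_i c_i \geq 1$ and $b = a_1^{c_1} \cdots a_k^{c_k}$, where each $c_i$ is bounded by a constant depending only on $|S|$. Commutativity of $S$ (and hence of $S^n$) is exactly what lets me collect a product of generators into such a form; the content of the proof lies in bounding the $c_i$ so that the certificate has polynomial size.

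To bound the $c_i$, I would use the eventual periodicity of powers in a finite semigroup. For any $a \in S$, the sequence $a, a^2, a^3, \ldots$ is ultimately periodic with index $i_a \leq |S|$ and period $p_a \leq |S|$. For $a \in S^n$, computed coordinatewise, one gets $i_a = \max_{j \in \oneto{n}} i_{a(j)} \leq |S|$ and $p_a = \lcm_{j \in \oneto{n}} p_{a(j)}$, which divides the constant $P := \lcm(1, 2, \ldots, |S|)$. Setting $I := |S|$, it follows that $a^{c+P} = a^c$ for every $a \in S^n$ and every $c \geq I$. Hence any exponent $c_i \geq I$ in our product can be reduced modulo $P$ to some $c_i' \in \{I, \ldots, I+P-1\}$ with $a_i^{c_i'} = a_i^{c_i}$, while $c_i < I$ needs no reduction. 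In either case one may assume $c_i \leq I+P-1$, and crucially both $I$ and $P$ depend only on $|S|$, not on $n$ or $k$.

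With this bound in hand, the certificate $(c_1, \ldots, c_k) \in \{0, 1, \ldots, I+P-1\}^k$ has bit-length $O(k \log(I+P)) = O(k)$, polynomial in the input size $n(k+1)$. The verifier computes $a_1^{c_1} \cdots a_k^{c_k}$ in $S^n$ using at most $(I+P-1)k$ coordinatewise multiplications of $n$-tuples, a total of $O(nk)$ multiplications in $S$, and then checks equality with $b$. This will place $\smp(S)$ in $\np$. I do not foresee a real obstacle; the only point deserving care is the observation that the periodicity constants for elements of $S^n$ depend only on $S$ and not on $n$, which is immediate once one interprets powers coordinatewise.
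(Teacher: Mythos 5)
Your proposal is correct and follows essentially the same route as the paper: use commutativity to rewrite any witnessing product as $a_1^{c_1}\cdots a_k^{c_k}$ and bound each exponent by a constant depending only on $|S|$, yielding an $O(k)$-bit certificate verifiable in polynomial time. The only (cosmetic) difference is how the exponent bound is obtained — you use index/period estimates and $\lcm(1,\ldots,|S|)$, while the paper derives a single uniform bound from the cyclic subsemigroup generated by the tuple listing all elements of $S$.
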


\begin{proof}
Let $\{a_1,\ldots,a_k\}\subseteq S^n,\,b \in S^n$ be an instance of $\smp(\alg S)$. 
Let $x := ( s_1,\ldots,s_{|S|} )$ be a list of all elements of $S$, and $r := | \subuni{x} |$. 
Now $\subuni{x} = \{ x^1,\ldots,x^r \}$, and 
for each $\ell \in \mathbb{N}$ there is some $m \in \oneto{r}$ such that $x^\ell = x^m$.
Since $x$ contains all elements of $S$, we have
\[ \forall y \in S^n \, \forall \ell \in \mathbb{N} \, 
\exists m \in \oneto{r} \colon y^\ell = y^m. \]
If $b \in \subuni{ a_1,\ldots,a_k }$, then there is a witness $(\ell_1,\ldots,\ell_k) \in \{ 0,\ldots,r \}^k$ such that $b = {a_1}^{\ell_1} \dotsm {a_k}^{\ell_k}$. 
The size of this witness is $\mathcal{O}( k \log(r) )$. 
Note that $r$ depends only on $\alg S$ and not on the input size $n(k+1)$.
Given $\ell_1,\ldots,\ell_k$ we can verify $b = {a_1}^{\ell_1} \dotsm {a_k}^{\ell_k}$ in time polynomial in $n(k+1)$. Hence $\smp(\alg S)$ is in \np{}.
\end{proof}
\begin{lma}
\label{lma:not_clifford_nilpotent_sg_0}
Let $\alg S$ be a finite semigroup, $e \in S$ be idempotent, and $a \in S$.
Assume that $ea = ae = a$ and $\subuni{ a }$ is not a group. 
Then $\smp(S)$ is \np-hard.
\end{lma}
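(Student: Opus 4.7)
The plan is to reduce the NP-complete \textsc{Exact Cover} problem to $\smp(S)$. Given an instance of \textsc{Exact Cover} with universe $U = \{u_1, \ldots, u_n\}$ and family $\mathcal{F} = \{F_1, \ldots, F_k\} \subseteq 2^U$, I construct generators $t_1, \ldots, t_k \in S^n$ by setting $t_i(j) := a$ if $u_j \in F_i$ and $t_i(j) := e$ otherwise, and I set the target $b \in S^n$ by $b(j) := a$ for every $j \in [n]$. The claim to prove is that $b \in \subuni{t_1,\ldots,t_k}$ iff some subfamily of $\mathcal{F}$ is an exact cover of $U$.

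The engine of the reduction is a structural fact about the cyclic subsemigroup $\langle a \rangle$. Because $\langle a \rangle$ is finite but not a group, its index must be at least $2$, so $a^\ell \neq a$ for every $\ell \geq 2$; also $e \neq a$, since otherwise $e^2 = e$ would give $a^2 = a$, making $\langle a \rangle$ trivially a group. Combined with $ea = ae = a$ and $e^2 = e$, a straightforward induction yields $a^m e = a^m = e a^m$ for all $m \geq 1$. Therefore, at any coordinate $j$, a product $t_{i_1} \cdots t_{i_m}$ evaluates to $a^{\ell_j}$, where $\ell_j$ counts how many of the chosen factors $t_{i_s}$ have $u_j \in F_{i_s}$.

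Given this evaluation, $b$ is generated iff there exists a product with $\ell_j = 1$ for every $j \in [n]$, i.e., every $u_j$ is covered by exactly one factor (counted with multiplicity). Repeated use of the same generator $t_i$ would force $\ell_j \geq 2$ for every $u_j \in F_i$, so without loss of generality the chosen indices $i_1, \ldots, i_m$ are distinct, and the condition becomes: $\{F_{i_1}, \ldots, F_{i_m}\}$ is an exact cover of $U$. The reduction is clearly polynomial in $n + k$, so $\smp(S)$ is NP-hard.

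The step most worth pinning down is the implication $a^\ell = a \Rightarrow \ell = 1$: this is really the only place the hypothesis ``$\langle a \rangle$ is not a group'' enters, and it is what makes the multiplicity check sharp enough to capture \emph{exact} cover rather than mere cover. Everything else -- reducing a mixed word in $\{a, e\}$ to a power of $a$, checking the reduction is polynomial, and translating back and forth between subfamilies and generator sequences -- is routine and uses only the identities $ea = ae = a$ and $e^2 = e$.
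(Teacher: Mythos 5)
Your reduction is exactly the one in the paper: the same encoding of \textsc{Exact Cover} by characteristic tuples over $\{a,e\}$ with target $b \equiv a$, and the same key observation that $\langle a\rangle$ not being a group forces $a \notin \{a^2, a^3, \ldots\}$, which makes coverage multiplicities exactly one. Your write-up just spells out this last fact (and $e \neq a$) in slightly more detail than the paper does; the argument is correct.
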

\begin{proof}
We reduce EXACT COVER to $\smp( \alg S )$.
The former is one of Karp's 21 \np-complete problems \cite{Karp1972}.
\dproblem{Exact Cover}{
$n \in \mathbb{N}$, sets $C_1,\ldots,C_k \subseteq \oneto{n}$ 
}{
Are there disjoint sets $D_1,\ldots,D_m \in \{ C_1,\ldots,C_k \}$ such that 
$\bigcup_{i=1}^{m} D_i = \oneto{n}$?
}
Fix an instance $n, C_1,\ldots,C_k$ of EXACT COVER. 
Now we define characteristic functions $c_1,\ldots,c_k, b \in S^n$ for $C_1,\ldots,C_k, \oneto{n}$, respectively.
For $j \in\oneto{k},\, i \in \oneto{n}$, let
\begin{equation*}
b(i) := a
\quad
\text{and}
\quad
c_j(i) := 
\begin{cases}
   a & \text{if } i \in C_j \text{,} \\
   e & \text{otherwise.}
\end{cases}
\end{equation*}
Now let $\{c_1,\ldots,c_k\} \subseteq S^n,\,b \in S^n$ be an instance of $\smp(\alg S)$.
We claim that
\begin{align*}
&b \in \subuni{ c_1,\ldots,c_k } \quad\text{iff}\quad
\exists \text{ disjoint } D_1,\ldots,D_m \in \{ C_1,\ldots,C_k \} \colon 
\bigcup_{i=1}^{m} D_i = \oneto{n}.
\end{align*}

"$\Rightarrow$": 
Let $d_1,\ldots,d_m \in \{ c_1,\ldots,c_k \}$ such that $b = d_1 \cdots d_m$.
Let $D_1,\ldots,D_m$ be the sets corresponding to $d_1,\ldots,d_m$, respectively.
Then $\bigcup_{i=1}^{m} D_i = \oneto{n}$. The union is disjoint since $a \notin \{ a^2, a^3,\ldots \}$.

"$\Leftarrow$": 
Fix $D_1,\ldots,D_m$ whose disjoint union is $\oneto{n}$.
Let $d_1,\ldots,d_m \in \{ c_1,\ldots,c_k \}$ be the characteristic functions of $D_1,\ldots,D_m$, respectively. 
Then $b = d_1 \dotsm d_m$.
\end{proof}

\begin{clr}\label{clr:not_clifford_nilpotent_sg}
Let $\alg S$ be a finite commutative semigroup that does not fulfill one of the equivalent 
conditions of Lemma \ref{lma:tfae_clifford_nilpotent}. Then $\smp(\alg S)$ is \np-hard.
\end{clr}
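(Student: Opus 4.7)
The plan is to derive NP-hardness directly from Lemma~\ref{lma:not_clifford_nilpotent_sg_0} by producing, from the failure of the equivalent conditions of Lemma~\ref{lma:tfae_clifford_nilpotent}, an idempotent $e \in S$ and an element $a \in S$ with $ea = ae = a$ such that $\langle a \rangle$ is not a group.

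First I would observe that the most convenient condition to work with is item~(\ref{lma:tfae_clifford_nilpotent_list_3b}), which states that all idempotents of $S$ are central and that for every idempotent $e$ and every $a \in S$ with $ea = a$ the element $a$ generates a group. Since $S$ is commutative, the centrality clause is automatic, so in the commutative setting the content of~(\ref{lma:tfae_clifford_nilpotent_list_3b}) reduces to the second clause alone. Consequently, if $S$ fails the equivalent conditions of Lemma~\ref{lma:tfae_clifford_nilpotent}, then in particular~(\ref{lma:tfae_clifford_nilpotent_list_3b}) fails, and there exist an idempotent $e \in S$ and $a \in S$ with $ea = a$ for which $\langle a \rangle$ is not a group.

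Next I would use commutativity once more to conclude that $ae = ea = a$, so the pair $(e,a)$ satisfies exactly the hypotheses of Lemma~\ref{lma:not_clifford_nilpotent_sg_0}. Invoking that lemma yields the NP-hardness of $\smp(S)$, which is the desired conclusion.

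There is essentially no obstacle here beyond noting that among the four equivalent conditions of Lemma~\ref{lma:tfae_clifford_nilpotent}, item~(\ref{lma:tfae_clifford_nilpotent_list_3b}) is the one whose negation hands us precisely the data required by Lemma~\ref{lma:not_clifford_nilpotent_sg_0}; combined with commutativity the argument is a one-line reduction.
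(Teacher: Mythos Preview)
Your proposal is correct and follows essentially the same approach as the paper: both arguments observe that commutativity makes the centrality clause of condition~(\ref{lma:tfae_clifford_nilpotent_list_3b}) automatic, extract from its failure an idempotent $e$ and an $a$ with $ea = ae = a$ and $\langle a\rangle$ not a group, and then invoke Lemma~\ref{lma:not_clifford_nilpotent_sg_0}.
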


\begin{proof}
The semigroup $S$ violates condition (\ref{lma:tfae_clifford_nilpotent_list_3b}) of Lemma \ref{lma:tfae_clifford_nilpotent}.
Since the idempotents are central in $S$,
there are $e \in S$ idempotent and $a \in S$ such that $ea = ae = a$ and $\subuni a$ is not a group.
Now the result follows from Lemma \ref{lma:not_clifford_nilpotent_sg_0}.
\end{proof}

Now we are ready to prove our dichotomy result for commutative semigroups.

\begin{proof}[Proof of Theorem \ref{thm:tfae_clifford_nilpotent_commut}]
The conditions in 
Theorem \ref{thm:tfae_clifford_nilpotent_commut} are the ones from 
Lemma \ref{lma:tfae_clifford_nilpotent} adapted to the commutative case.
Thus they are equivalent. 
If one of them is fulfilled, then $\smp(\alg S)$ is in \polyt{} by Corollary~\ref{thm:clifford_nilpotent}.

Now assume the conditions are violated.
Then $\smp(\alg S)$ is \np-complete by 
Lemma \ref{lma:smp_commutative_semigroups} and Corollary \ref{clr:not_clifford_nilpotent_sg}.
\end{proof}

\section{Conclusion}
\label{sec:conclusion}

We showed that the \smp{} for finite semigroups is always in $\pspace$ and provided examples
 of semigroups $S$ for which $\smp(S)$ is in \polyt, \np-complete, \pspace-complete, respectively.
For the \smp{} of commutative semigroups we obtained a dichotomy between the $\np$-complete
and polynomial time solvable cases. Further we showed that the \smp{} for finite ideal extensions
 of a Clifford semigroup by a nilpotent semigroup is in \polyt. For non-commutative semigroups
 there are several open problems.

\begin{prm}
 Is the \smp{} for every finite semigroup either in \polyt, \np-complete, or \pspace-complete?
\end{prm}

Bands (idempotent semigroups) are well-studied. Still we do not know the following:

\begin{prm}
What is the complexity of the \smp{} for finite bands?%
\footnote{While this paper was under review, Markus Steindl showed that \smp{} for any finite band is either in \polyt ~or $\np$-complete \cite{smpbands}.}
More generally, what is the complexity in case of completely regular semigroups?
\end{prm}


\begin{thebibliography}{10}

\bibitem{Furst1980}
M.~Furst, J.~Hopcroft, and E.~Luks.
\newblock Polynomial-time algorithms for permutation groups.
\newblock In {\em Foundations of Computer Science, 1980., 21st Annual Symposium
  on}, pages 36--41, Oct 1980.

\bibitem{Howie1995}
J.~Howie.
\newblock {\em Fundamentals of Semigroup Theory}.
\newblock Clarendon Oxford University Press, 1995.

\bibitem{IM:TLAA}
P.~Idziak, P.~Markovi{\'c}, R.~McKenzie, M.~Valeriote, and R.~Willard.
\newblock Tractability and learnability arising from algebras with few
  subpowers.
\newblock {\em SIAM J. Comput.}, 39(7):3023--3037, 2010.

\bibitem{Karp1972}
R.~M. Karp.
\newblock Reducibility among combinatorial problems.
\newblock In R.~E. Miller, J.~W. Thatcher, and J.~D. Bohlinger, editors, {\em
  Complexity of Computer Computations}, The IBM Research Symposia Series, pages
  85--103. Springer US, 1972.

\bibitem{Ko:LBNP}
D.~Kozen.
\newblock Lower bounds for natural proof systems.
\newblock In {\em 18th {A}nnual {S}ymposium on {F}oundations of {C}omputer
  {S}cience ({P}rovidence, {R}.{I}., 1977)}, pages 254--266. IEEE Comput. Sci.,
  Long Beach, Calif., 1977.

\bibitem{Kozik2008}
M.~Kozik.
\newblock A finite set of functions with an {EXPTIME}-complete composition
  problem.
\newblock {\em Theoretical Computer Science}, 407(1\textendash3):330--341,
  2008.

\bibitem{Mayr2012}
P.~Mayr.
\newblock The subpower membership problem for {Mal'cev} algebras.
\newblock {\em International Journal of Algebra and Computation},
  22(07):1250075, 2012.

\bibitem{Pa:CC}
C.~H. Papadimitriou.
\newblock {\em Computational complexity}.
\newblock Addison-Wesley Publishing Company, Reading, MA, 1994.

\bibitem{Savitch1970}
W.~J. Savitch.
\newblock Relationships between nondeterministic and deterministic tape
  complexities.
\newblock {\em J. Comput. System. Sci.}, 4:177--192, 1970.

\bibitem{smpbands}
M.~Steindl.
\newblock The subpower membership problem for bands.
\newblock Available at\\{}\verb+http://arxiv.org/pdf/1604.01014v1.pdf+.

\bibitem{Willard2007}
R.~Willard.
\newblock Four unsolved problems in congruence permutable varieties.
\newblock Talk at International Conference on Order, Algebra, and Logics,
  Vanderbilt University, Nashville (June 12\textendash16, 2007), 2007.

\bibitem{Zweckinger2013}
S.~Zweckinger.
\newblock Computing in direct powers of expanded groups.
\newblock Master's thesis, Johannes Kepler Universit\"at Linz, Austria, 2013.

\end{thebibliography}

\end{document}